\documentclass[reqno]{amsart}

\usepackage{hyperref}
\usepackage[dvipsnames]{xcolor}

\usepackage{amsmath,amsfonts,amssymb,amsthm,amscd}

\usepackage{enumerate}
\usepackage{color,subfigure}
\usepackage{multicol}
\usepackage{float}

\usepackage{verbatim}

\usepackage[noabbrev]{cleveref}

\definecolor{chianti}{rgb}{0.6,0,0}
\definecolor{meretale}{rgb}{0,0,.6}
\definecolor{leaf}{rgb}{0,.35,0}

\DeclareMathOperator{\Ann}{Ann}

\DeclareMathOperator{\Hom}{Hom}

\DeclareMathOperator{\rank}{dim}
\DeclareMathOperator{\edim}{edim}

\DeclareMathOperator{\Tor}{Tor}

\DeclareMathOperator{\depth}{depth}
\DeclareMathOperator{\reg}{reg}

\DeclareMathOperator{\Ext}{Ext}

\newcommand{\A}{\mathbb{A}}

\newcommand{\sube}{\subseteq}

\renewcommand{\phi}{\varphi}

\renewcommand{\P}{\mathbb{P}}

\renewcommand{\H}{{\rm H}}
\newcommand{\G}{{\rm G}}
\newcommand{\T}{{\rm T}}
\newcommand{\B}{{\rm B}}
\newcommand{\C}{{\rm C}}
\newcommand{\GT}{{\rm GT}}
\newcommand{\GH}{{\rm GH}}
\newcommand{\GS}{{\rm GS}}
\renewcommand{\S}{{\rm S}}

\usepackage{fbox}

\theoremstyle{plain} 
\newtheorem{theorem}{Theorem}[section] 
\newtheorem{corollary}[theorem]{Corollary}
\newtheorem{lemma}[theorem]{Lemma}

\newtheorem*{question*}{Question}

\newtheorem{prop}[theorem]{Proposition}

\newtheorem{quest}[theorem]{Question}

\theoremstyle{definition}
\newtheorem{definition}[theorem]{Definition}

\newtheorem{example}[theorem]{Example}
\newtheorem{notation}[theorem]{Notation}

\theoremstyle{remark}
\newtheorem{remark}[theorem]{Remark}

\usepackage[dvipsnames]{xcolor}

\title[Tor algebras classified by Waring rank]{Classification of Tor algebras for rings from general points by Waring Rank}

\author{Kara Fagerstrom}
\address{Department of Mathematics, University of Nebraska--Lincoln, 203 Avery Hall}
\email{kfagerstrom2@huskers.unl.edu}

\begin{document}

\begin{abstract}
    We determine the multiplicative structure on the Tor algebras associated to the coordinate rings of general
points in $\mathbb{P}^3$ in terms of the number of points. 
We show that  in codimension 
four, the Waring rank of a form   determines the Tor algebra class of its apolar Artinian Gorenstein ring for a dense subset of the forms of fixed Waring rank and sufficiently large degree.
\end{abstract}

\maketitle

\section{Introduction}

The structure of minimal free resolutions plays a central role in commutative algebra and algebraic geometry.  Tor algebras provide a multiplicative framework that enriches this homological landscape. For a quotient $R$ of a regular ring $Q$ with residue field $k$  the $k$-algebra $\Tor_Q(R,k)$ is termed the Tor algebra of $R$.

In low
codimension, the multiplicative structure on the Tor algebra of a local or graded
ring admits a remarkably rigid classification. Foundational work of
Buchsbaum and Eisenbud \cite{BuchsbaumEisenbud} established that minimal free resolutions of rings of
codepth at most three admit differential graded algebra structures, and subsequent
work of Weyman \cite{Weyman} and Avramov--Kustin--Miller \cite{AKM-codepth3} classified the possible
multiplicative structures on the Tor algebras arising in these settings. In codimension four,
Kustin and Miller \cite{Kustin_Miller_Codepth4class} extended this classification to Gorenstein rings. More recent results around these themes include \cite{Avramov}, \cite{ChristensenVeliche14}, \cite{ChristensenVelicheWeyman19},  \cite{ChristensenVelicheWeyman20}, \cite{VandeBogert21}, \cite{VandeBogert22}, \cite{ChristensenVeliche23} and \cite{Hardesty}. Recent structure theorems in codimension four as in \cite{Gor_Six_Gen} provide novel insights. 

Despite these classification results, relatively little is known about how the
multiplicative structure of the Tor algebra behaves in naturally occurring
families of rings. Two particularly rich sources of examples arise from coordinate rings of finite sets of
general points in the projective space $\P^3$ (in codimension three) and from Artinian Gorenstein rings defined via
Macaulay inverse systems (in codimension four). 
Macaulay inverse systems provide a correspondence between
Artinian Gorenstein rings and homogeneous forms. Every homogeneous polynomial $F$ of degree $s$ gives rise to a unique associated apolar algebra which is Artinian Gorenstein, and conversely every graded Artinian Gorenstein ring arises in this manner. 
The decomposition of $F$ as a sum of  powers of linear forms 
\begin{equation}\label{eq: intro F}
F= \ell_1^{s} + \cdots + \ell_t^{s},
\end{equation}
determines its
Waring rank. Specifically the Waring rank of $F$ is the smallest integer $t$ such that an identity of the form \eqref{eq: intro F} exists for some linear forms $\ell_i$.
The two families of rings considered above are closely related by associating to a finite set of points
$X \subset \mathbb{P}^3$ a homogeneous form $F_X$ via \eqref{eq: intro F}, where the linear forms $\ell_i$ correspond to the points of $X$. This construction links
the geometry of points in projective space with families of Artinian Gorenstein
rings of prescribed Waring rank.

The purpose of this paper is to study the multiplicative structure on the Tor
algebras associated to these families and to show that, in codimension four, the Waring rank determines the Tor algebra class for a dense subset of the forms of fixed Waring rank.
More precisely, we classify the Tor algebras of the coordinate rings of general
points in $\mathbb{P}^3$, and we extend this classification to general
Artinian Gorenstein rings of codimension four obtained from Macaulay dual
generators of fixed Waring rank. In what follows we use the notation $\mu(t) =\min\left\{j\ge1:\binom{j+3}{3}>t\right\}$.

\begin{theorem}\label{full_theorem}

The Tor algebras associated to the coordinate ring $R$ of $t$ general points in $\P^3$ can be classified by the value of $t$. Likewise, the Tor algebra associated to a general Artinian Gorenstein ring $S$ of codimension 4 and Waring rank $t$ satisfying $2\mu(t)\leq\reg(S)\leq 3\mu(t)-4$ or $\reg(S)\geq 3\mu(t)+1$ can be classified by the value of $t\geq 4$ as given below, where the isomorphism classes in the table  are described in  \cref{s: Tor classification}.

\begin{center}

    \begin{tabular}{|c|c c c c c c|}
    \hline
    $t=$ & 4&5&6&7&8&$\geq9$ \\  
    \hline
    $R$ &   H(0,0)&  G(5)&  H(3,2)&  T&  H(1,0)&  H(0,0) \\
    $S$ & GS&GH(5)&G(5)&GT&GH(1)&GS\\
    \hline
  \end{tabular}
\end{center}
\end{theorem}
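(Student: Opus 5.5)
The plan has two halves: first classify the codimension three coordinate rings $R$, then transfer the result to the Gorenstein rings $S$.

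\emph{Codimension three.} Here $R$ is the coordinate ring of $t$ general points in $\P^3$, so $R$ is Cohen--Macaulay of codepth three. Its Tor algebra class is therefore one of those in the classification of Weyman and Avramov--Kustin--Miller, namely $\C$, $\S$, $\T$, $\B$, $\G(r)$ or $\H(p,q)$. To decide which, we first fix the Betti table. General points have generic Hilbert function $h_R(d)=\min\{\binom{d+3}{3},t\}$, and by the minimal resolution conjecture, known in $\P^3$ (Ballico--Geramita), the resolution has no ghost terms beyond those forced by the Hilbert function. This gives $\beta_1,\beta_2,\beta_3$ explicitly, with generators concentrated in degrees $\mu(t)$ and possibly $\mu(t)+1$.

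Next we locate the multiplication by degree bookkeeping. The products $\Tor_1\cdot\Tor_1\to\Tor_2$ and $\Tor_1\cdot\Tor_2\to\Tor_3$ are graded, and $\Tor_1\cdot\Tor_1$ lands in internal degree $\ge 2\mu(t)$. For $t\ge 9$ we have $\mu(t)\ge 2$, and $\Tor_2,\Tor_3$ live in degrees at most about $\mu(t)+2$. Hence all products vanish, and since $R$ is not Gorenstein, the class is $\H(0,0)$.

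The small cases $t=4,\dots,8$ each have $\mu=1$ or $2$ and are handled individually:
\begin{itemize}
\item For $t=4$, the ideal is $(x_1x_2,\ldots)$, the monomial ideal of the coordinate points; this gives $\H(0,0)$.
\item For $t=5,\dots,8$, the ideal is generated by quadrics, $10-t$ of them. We compute $\Tor_1\cdot\Tor_1$ and the pairing $\Tor_1\times\Tor_2\to\Tor_3$ using a DG algebra structure on the minimal resolution, via Macaulay2-style computation or explicit Koszul-type arguments on specialized but still general configurations.
\end{itemize}
For example, $t=5$ gives 5 quadrics with a Pfaffian-like structure, so $\G(5)$. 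For $t=8$ we get 2 quadrics plus cubics, and one product survives, giving $\H(1,0)$. Rank of these multiplication maps is lower semicontinuous, so a single example together with rank considerations extends the class to the general configuration.

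\emph{Codimension four.} For $S=A_F$ with $F=\sum\ell_i^s$ built from general points $X$, the key input is that for $s$ in the stated range the ideal $\Ann(F)$ agrees with $I_X$ in low degrees. Precisely, $(\Ann F)_d=(I_X)_d$ for $d\le s-\mu(t)$ or so, and $S$ is obtained from $R$ by a Gorenstein doubling / linkage construction. This gives a mapping cone or trimming resolution of $S$ built from the resolution of $R$ and its dual shifted by $s+4$.

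The regularity conditions $2\mu\le\reg(S)\le3\mu-4$ or $\reg S\ge3\mu+1$ are exactly what keep the dual part of the resolution away from the degrees where extra products could appear. Under these conditions the Tor algebra of $S$ is the Gorenstein ``doubling'' of that of $R$, which matches the codimension four Gorenstein classes of Kustin--Miller as in \cref{s: Tor classification}:
\[
\H(0,0)\mapsto\GS,\quad \G(5)\mapsto\GH(5),\quad \H(3,2)\mapsto\G(5),\quad \T\mapsto\GT,\quad \H(1,0)\mapsto\GH(1).
\]
Density then follows because $F_X$ for general $X$ is dense among forms of Waring rank $t$, and the Tor class is constant on an open set.

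The main obstacle is this last transfer step. We must verify that the products in $\Tor^Q(S,k)$ are exactly those inherited from $R$ together with the Poincaré duality pairing, and that no new products arise in the gap regions. The excluded window $3\mu-3\le\reg\le3\mu$ suggests that degree coincidences there can create extra $\Tor_1\cdot\Tor_1\cdot\Tor_1$-type or $\Tor_1\cdot\Tor_2$ products. We would attack it by writing explicit graded degrees of each $\Tor_i(S)$ and checking, case by case in $t$, which multiplication maps are forced to vanish by degree and which are forced nonzero by duality.
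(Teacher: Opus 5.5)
\textbf{There is a genuine gap in the codimension-four step.} You flag this step as the main obstacle, but the plan you give for it cannot close it. Degree bookkeeping together with Poincar\'e duality only produces \emph{upper} bounds on $\bar p,\bar q$ (as in \Cref{bounds_for_codim4}), plus the all-or-nothing constraint that $A_1A_1=0$ if and only if $A_1A_2=0$. Duality can propagate a nonzero product, but it never creates one.

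What is missing is a mechanism that makes products of $\Tor^Q(R,k)$ survive in $\Tor^Q(S,k)$. In the paper this is \Cref{pValue}. Boij's formula $b_{ij}(S)=b_{ij}(R)+b_{4-i,s+4-j}(R)$ (\Cref{isDoubling}) shows that the mapping cone of a lift of $\omega_R(-a)\to R$ is a \emph{minimal} resolution of $S$. Hence the inclusion of the resolution of $R$ into the cone induces an injective homomorphism of bigraded algebras $\Tor^Q(R,k)\hookrightarrow\Tor^Q(S,k)$, and so $\bar p\ge p$. Combined with the upper bounds this gives:
\begin{itemize}
\item for $t=7$, $\bar p=3$ and $\bar q\le 3$, so the class is $\GT$ and not $\GH(3)$;
\item for $t=8$, $\bar p=1$, so the class is $\GH(1)$.
\end{itemize}
For $t=5,6$ the value $\bar p=5$ is not accounted for by $A_1\cdot A_1$ of $R$, since $p=0$, resp.\ $p=3$. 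The remaining products pair the quadrics with the new generators of degree $s-1$. The paper obtains them from the structure theorem for six-generated codimension-four Gorenstein ideals, $I=(L,y)$ with $L$ Pfaffian, together with K\"unneth (\Cref{6-gens}). Alternatively, you could transport the products $A_1\cdot A_2\to A_3$ of $R$ through the injection and then use the perfect pairing $A'_1\times A'_3\to A'_4$, as in \Cref{Mult_duality}.

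\textbf{Further errors.}
\begin{itemize}
\item Your dictionary contains $\H(3,2)\mapsto\G(5)$. But $\G(5)$ is a codimension-three class and cannot be the class of a codimension-four Gorenstein ring. For $t=6$ the ideal $\Ann(F_X)$ has six generators, and the paper's proof gives $\GH(5)$. The entry $\G(5)$ in the table of the statement is a misprint, and your dictionary inherits it.
\item Your degree argument for $t\ge 9$ fails at $t=9$. There $\mu(9)=2$, so $[A_1]_2\cdot[A_1]_2\to[A_2]_4$ is allowed by degree. It vanishes only because $\alpha_1=1$ (so $\bigwedge^2[A_1]_2=0$) and $\alpha_2=0$. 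Similarly, in codimension four at $t=9$ the degree bound only gives $\bar q\le 1$, and it is the duality constraint that forces $\bar q=0$.
\item The ideals for $t=7,8$ are not generated by quadrics: they have $3$ quadrics and a cubic, resp.\ $2$ quadrics and $4$ cubics.
\item Your semicontinuity route needs a family version of the DG algebra structure and explicit examples. Even then, at $t=7$ the ranks $(p,q,r)=(3,0,0)$ do not by themselves separate $\T$ from $\H(3,0)$.
\end{itemize}
The paper avoids computation in codimension three entirely:
\begin{itemize}
\item $t=5$ is Gorenstein on five generators, hence $\G(5)$.
\item $t=6,7$ are almost complete intersections of type $2$ and $3$, classified by \Cref{aci-class}.
\item For $t=8$, the Koszul syzygy of the two quadrics is a minimal second syzygy, since there are no second syzygies of degree $3$. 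This gives $p\ge 1$, which meets the bound $p\le 1$, $q=0$.
\end{itemize}
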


Our approach combines several ingredients. First, we  derive sharp bounds on
the possible ranks of multiplication in a Tor algebra and introduce the
notion of \emph{maximum multiplication rank}, which describes when these bounds
are attained. We then analyze the graded Betti
tables arising from the Minimal Resolution Conjecture and deduce that the Tor algebras for the coordinate rings of general sets of points in $\P^3$ satisfy maximum multiplication rank. For the codimension four Gorenstein case, we employ doubling
constructions and Macaulay inverse systems to relate the Tor algebra of an
Artinian Gorenstein ring to that of the coordinate ring of a set of points. This connection allows us to transfer some information about multiplicative
structures from codimension three to codimension four and once again to show that the corresponding Tor algebras enjoy maximum multiplication rank. The classification shown in \Cref{full_theorem} is then derived as a consequence of this property.
In summary, the classification reveals a strong parallel between the geometry
of general points in projective space and the homological structure of the
corresponding apolar algebras.

\paragraph{\bf Acknowledgements} The author received funding from NSF RTG grant DMS--2342256 Commutative Algebra at Nebraska and NSF grant DMS--2401482. Computations in Macaulay2 \cite{M2} performed with the package \cite{TorAlgebraSource} as described in the accompanying paper \cite{TorAlgebraArticle} were essential for the development of this paper.

\section{Background}

Throughout the paper we fix an algebraically closed field $k$ of characteristic zero. The characteristic assumption is needed to apply results on Betti numbers of general points in $\P^3$ from \cite{BGgenPts}. The field being algebraically closed allows to express arbitrary $k$-linear combinations of powers of linear forms as sums of such powers as well as ensuring the conditions for some of the classification theorems on Tor algebras in \cite{Kustin_Miller_Codepth4class} and \cite{Gor_Six_Gen} are met.

\subsection{Multiplicative structures on $\Tor$ algebras of low codimension}\label{s: Tor classification}

Let $R$ be a commutative Noetherian  
local or graded
ring with residue field $k$ which is a quotient of a regular local or graded ring $Q$. Assume $R=Q/I$ with $I\sube \mathfrak{m}^2$ where $\mathfrak{m}$ is the maximal ideal of $Q$.
Let 
\[
F= 0\to F_c\to \cdots \to F_1\to  F_0\to 0
\]
be a minimal free resolution for $R$ over $Q$. By the Auslander-Buchsbaum theorem $c=\edim R-\depth R$ is the codepth of $R$.  When $R$ is Cohen-Macaulay, $c$ coincides with the codimension of $R$. Buchsbaum and Eisenbud \cite{BuchsbaumEisenbud} proved that $F$ has a differential graded (DG) algebra structure when $c\leq 3$, which is not necessarily unique. 

The Tor algebra $H(F\otimes_Q k)=\Tor^Q(R,k)$ is also a DG algebra. For some classes of rings, the multiplicative structure on the Tor algebra can be classified up to isomorphism.
When $c\leq 3$ the multiplicative structures of the Tor algebra were determined by Weyman \cite{Weyman} and by Avramov, Kustin, and Miller \cite{AKM-codepth3}. Additionally the structures have also been classified  when $c=4$ and $R$ is Gorenstein, by Kustin and Miller \cite{Kustin_Miller_Codepth4class}.   
We describe the multiplication classes of the Tor algebras in these settings. 

The homology of $F\otimes_Q k$ is an exterior algebra if and only if $R$ is a complete intersection \cite[Theorem 2.7]{Assmus}. In this case $R$ is said to be of class $C(c)$. When $c=2$ and $R$ is not a complete intersection then $\Tor^Q(R,k)$ has trivial multiplication and $R$ belongs  to class $\rm S$.

We will use conventional notation as in \cite{Avramov} to describe the classes when $c=3$. 
\begin{notation}\label{notation_for_Tor_invariants}

    Set $c=\edim R-\depth R$.
    For 
   $ A=\Tor^Q(R,k)$ let
    \begin{alignat*}{2}
    {l} &=\rank_k A_1 &\quad p&=\rank_k(A_1\cdot A_1)\\
    m&=\rank_kA_2 &\quad q&=\rank_k(A_1\cdot A_2)\\
    n&=\rank_k A_3 &\quad   r&=\rank_k(\delta:A_2\to \Hom(A_1,A_3)).
\end{alignat*}
    
\end{notation}

Let $A$ have basis $\{e_1,\ldots,e_{l}\}$ 
for $A_1$, $\{f_1,\ldots,f_m\}$ for $A_2$ and $\{g_1,\ldots,g_n\}$ for $A_3$.
The ring $R$ can be classified into one of the following isomorphism classes with the nonzero multiplication for $A_1\cdot A_1$ and $A_1\cdot A_2$ given: 

\begin{figure}[h!]    
\renewcommand{\arraystretch}{1.2}
\begin{tabular}{|l|c c c |c c|}
    \hline
Class & $p$& $q$ & $r$ & $A_1\cdot A_1$ & $A_1\cdot A_2$ \\ 
\hline
 $\C(3)$ & 3 & 1 & 3 &
         $e_1e_2=f_3$, $e_3e_1=f_2$, $e_2e_3=f_1$ & 
       $e_if_i=g_1$ for $1\leq i\leq 3$ \\ 
$\T$ & 3 & 0 & 0 &
    $e_1e_2=f_3$, $e_3e_1=f_2$, $e_2e_3=f_1$ & 
    0 \\ 
$\B$ & 1 & 1 & 2 &
    $e_1e_2=f_3$ & 
    $e_1f_1=g_1$, $e_2f_2=g_1$ \\
$\G(r)
$ & 0 & 1 & $r$ &
    0 &
    $e_if_i=g_1$ for $1\leq i\leq r$ \\ 
$\H(p,q)$ & $p$ & $q$ & $q$ &
    $e_{p+1}e_i=f_i$ for $1\leq i\leq p$ &
    $e_{p+1}f_{p+j}=g_j$ for $1\leq j\leq q$\\
    \hline
\end{tabular}
\caption{Tor algebras of codimension 3 rings.}
\label{fig: codim 3}
\end{figure}

A ring $R$ is Gorenstein if and only if it is Cohen-Macaulay and $n=1$. Gorenstein rings are either of class $\C(3)$ if $l=3$ 
or of class $\G(l)$ if $l>3$. 
The values of $p,q$ and $r$ are distinct for all classes except $\T$ and $\H(3,0)$. 
So in general the class of $R$ can be determined by the values of $p,q$ and $r$ shown in the table above. We leverage this approach in \Cref{ClassThmForJ}.

To distinguish between the classes $\T$ and $\H(3,0)$ it is convenient to use the following result.

\begin{lemma}\label{aci-class} \cite[Proof of Thm 2.]{Avramov-ACI}
    If $R$ is a Cohen-Macaulay almost complete intersection of codepth 3 it can be classified by type $n=\rank_k A_3$
    \begin{enumerate}
        \item Class $\H(3,2)$ when $n=2$.
        \item Class $\T$ when  $n\geq 3$ and odd.
        \item Class $\H(3,0)$ when $n\geq 4$ and even.
    \end{enumerate}
\end{lemma}

For Gorenstein rings $R$ of codepth 4
we will use the following notation:
\begin{notation}\label{Notation Gor tor invariants}
    For $c=4$, 
   let $ A=\Tor^Q(R,k)$, $m=\rank_k A_1-1$,
    \begin{alignat*}{2}
     \bar p&=\rank_k(A_1\cdot A_1),  &\quad \bar q&=\rank_k(A_1\cdot A_2).
\end{alignat*}
    
\end{notation}

Let $A$ have basis $\{e_1,\ldots,e_{m+1}\}$ for $A_1$, $\{f_1,\ldots,f_m,f_1',\ldots,f_m'\}$ for $A_2$, $\{g_1,\ldots,g_{m+1}\}$ for $A_3$, and $\{h\}$ for $A_4$. As $A$ is Gorenstein it exhibits 
Poincare duality: $e_ig_j=\delta_{ij}h$, $f_if_j'=\delta_{ij}h$, $f_if_j=f_i'f_j'=0$ for all $i$ and $j$.  

The ring $R$ can be classified into one of the following isomorphism classes with the  nonzero multiplication for $A_1\cdot A_1$ and $A_1\cdot A_2$ given. The nomenclature reflects the terminology employed in the Macaulay2 package \cite{TorAlgebraSource}.

\begin{figure}[h!]
\renewcommand{\arraystretch}{1.2}
\begin{tabular}{|r|c c| }
    \hline
Class & $A_1\cdot A_1$ & $A_1\cdot A_2$ \\
\hline
 $\C(4)$& 
        Exterior algebra & \\     
 $\GT$ & $e_1e_2=f_1$,$e_1e_3=f_2$, $e_2e_3=f_3$ &
    $e_1f_2'=e_2f_3'=-g_3$, \\
    & & $e_2f_1'=e_3f_2'=g_1$, $-e_1f_1'=e_3f_3'=g_2$ \\    
$\GH(\bar p)$ &  $e_{\bar p+1}e_i=f_i$ for $1\leq i\leq \bar p$ &
    $e_{\bar p+1}f_{i}'=-g_i$, $e_if_i'=g_{\bar p+1}$ for $1\leq i\leq \bar p$\\
    $\GS$ & All zero & \\
        \hline
\end{tabular}
\caption{Tor algebras of Gorenstein codimension 4 rings.}
\label{fig: codim 4}
 \end{figure}

 \begin{remark}\label{Mult_duality}
      The classification is determined by the multiplication in $A_1\cdot A_1$ as they are distinct. From Poincare duality one can determine the multiplication in $A_1\cdot A_2$ from the multiplication in $A_1\cdot A_1$ and vice-versa  as  $$e_ie_j=f_k\iff e_jf_k'=g_i \iff -e_if_k'=g_j.$$ 
 \end{remark}

 The class of $R$ can be determined by the values of $\bar p$ and $\bar q$ as given below:
\begin{center}

\begin{tabular}{r|c c c c}
    Class &  C(4) &GT& $\overset{}{\GH(\bar p\geq 1)} $ &GS  \\ \hline
    $\bar p$ & 6&  3& $\bar p$& 0\\ 
    $\bar q$ & 4&3 &$\bar p+1$&0.\\
     
\end{tabular}
\end{center}
We leverage this in \Cref{AG codepth 4}.

\subsection{General points in $\mathbb{P}^3$ and their resolutions}

Let  $N>0$ be an integer and $Q=k[x_0,\ldots, x_N]$ be the coordinate ring of $\mathbb{P}^N$. A property holds for $t$ general points in $\mathbb{P}^N$ if it holds for every element of some nonempty Zariski open set of $(\mathbb{P}^N)^t$. Set 
\begin{eqnarray}
\mu(t) &=&\min\left\{i\ge1:\dim_k Q_i>t\right\} \label{eq: mu}\\
  d(t)&=&\min\{i\mid \dim_k Q_i\geq t\}. \label{eq: dt} 
\end{eqnarray}

The Minimal Resolution Conjecture (MRC) proposed by Lorenzini \cite{Lorenzini1, Lorenzini} 
concerns Betti numbers of the coordinate ring $R$ of $t$ general points defined as $$b_{i,j}=\dim_k \Tor_i^Q(R,k)_j.$$ It states that for $t$ general points in $\P^N$ 
the Betti numbers satisfy $b_{ij}=0$ when $j-i\notin \{ \mu(t)-1,\mu(t)\}$  and $b_{i,i+\mu(t)}\cdot b_{i+1,i+\mu(t)}=0$ for all $i$. Consequently the Betti table of $R$ has  at most two non-zero rows, labelled $\mu(t)$ and $\mu(t)-1$.  For $N=3$ and $t\geq 4$ the Betti table has the following shape:
\begin{equation}\label{Betti_codim3}
\begin{tabular}{r c c c c c}
 & 0 & 1 & 2 & 3 \\

total: & 1 & ${l}$ & $m$ & $n$ \\
0 : & 1 & . & . & . \\
$\mu(t)-1$ : & . & $\alpha_1$ & $\alpha_2$ & $\alpha_3$ \\
$\mu(t)$ : & . & $\beta_1$ & $\beta_2$ & $\beta_3$ 
\end{tabular}
\end{equation}

The regularity of a ring is $\max\{j-i\mid b_{ij}\neq 0\}$. 
For general sets of $t$ points,  $\mu(t)$ is the least degree of a relation of $R$, while $d(t)=\reg(R)$; see \cite[Corollary 4.7]{EisenbudSyzygy}. 

Although the MRC has been disproved for $N\geq 6$, $N\neq 9$, see \cite{MRC-counter}, Ballico and Geramita \cite{BGgenPts} proved the MRC is true for general points in $\P^3$. Additionally they gave formulas for the Betti numbers. The Betti tables for $4\leq t\leq 9$, i.e. when $\mu(t)=2$, are given below:

\begin{figure}[H]
\begin{center}
\begin{multicols}{3} 

$t=4$

\begin{tabular}{r c c c c}
 & 0 & 1 & 2 & 3 \\
total: & 1 & 6 & 8 & 3\\
0 : & 1 & . & . & . \\
1 : & . & 6 & 8 & 3 \\
2 : & . & . & . & . 
\end{tabular}

\columnbreak

$t=5$

\begin{tabular}{r c c c c}
 & 0 & 1 & 2 & 3 \\
total: & 1 & 5 & 5 & 1 \\
0 : & 1 & . & . & . \\
1 : & . & 5 & 5 & . \\
2 : & . & . & . & 1 
\end{tabular}

\columnbreak

$t=6$

\begin{tabular}{r c c c c}
 & 0 & 1 & 2 & 3 \\
total: & 1 & 4 & 5 & 2 \\
0 : & 1 & . & . & . \\
1 : & . & 4 & 2 & . \\
2 : & . & . & 3 & 2 
\end{tabular}
\end{multicols}

\begin{multicols}{3}

$t=7$

\begin{tabular}{r c c c c}
 & 0 & 1 & 2 & 3 \\
total: & 1 & 4 & 6 & 3 \\
0 : & 1 & . & . & . \\
1 : & . & 3 & . & . \\
2 : & . & 1 & 6 & 3 
\end{tabular}

\columnbreak

$t=8$

\begin{tabular}{r c c c c}
 & 0 & 1 & 2 & 3 \\
total: & 1 & 6 & 9 & 4 \\
0 : & 1 & . & . & . \\
1 : & . & 2 & . & . \\
2 : & . & 4 & 9 & 4 
\end{tabular}

\columnbreak

$t=9$

\begin{tabular}{r c c c c}
 & 0 & 1 & 2 & 3 \\
total: & 1 & 8 & 12 & 5 \\
0 : & 1 & . & . & . \\
1 : & . & 1 & . & . \\
2 : & . & 7 & 12 & 5 
\end{tabular}
\end{multicols}

\end{center}
\caption{Betti tables for coordinate rings of general points in $\P^3$.}\label{fig_Betti_tables}
\end{figure}

\subsection{Macaulay dual generators} 
\label{sect:MDG}

Let $Q=k[x_0,\ldots,x_N]$ be a polynomial ring and let $Q'=k[X_0,\ldots,X_N]$. The ring $Q'$ can be
regarded as a $Q$-module with action 

\[
x_i\circ F(X_0,\cdots,X_N)=\frac{\partial F}{\partial X_i}.
\]
 We regard $Q'$ as a graded $k$-algebra with $\deg X_i=\deg x_i$.
For each degree $i\geq 0$, the action of $Q$ on $Q'$ defines a non-degenerate $k$-bilinear pairing 
\begin{equation*}
\label{eq:MDPairing}
    Q_i \times Q'_i \longrightarrow k \text{ with } (f,F) \longmapsto f \circ F.
\end{equation*}
This gives for each $i\geq 0$ an isomorphism of $k$-vector spaces $Q'_i\cong \Hom_k(Q_i,k)$ via $F\mapsto\left\{f\mapsto f\circ F\right\}$ and thus $Q'=\bigoplus_{i\geq 0}\Hom_k(Q_i,k)$ is the graded dual of $Q$.

 It is a classical result of Macaulay \cite{Macaulay} (cf. \cite[Lemma 2.14]{IK}) that an Artinian $k$-algebra $S=Q/I$ is Gorenstein with socle degree $s$ if and only if  $I=\Ann_Q(F)=\{f\in Q\mid f\circ F=0\}$ for some homogeneous polynomial $F\in Q'_s$.  Moreover, this polynomial, termed a {\em Macaulay dual generator} for $S$, is unique up to a scalar multiple and $S$ is called {\em apolar} to $F$. Here by socle degree we mean $s=\max\{i\mid S_i\neq 0\}$ and this invariant is also known as the regularity of $S$.

\subsection{Doublings}

Let us recall the doubling construction which is needed later on.

\begin{definition}
Let $Q$ be a standard graded regular ring of dimension $N+1$. The \emph{canonical module} of a Cohen-Macaulay $Q$-module $M$ is $\omega_M = \Ext^{\dim Q - \dim M}_Q (M, \omega_Q)$ where $\omega_Q=Q(-N-1)$. 
\end{definition}

\begin{definition}\label{doubling}  
Let $R$ be a Cohen-Macaulay ring of dimension $d$ and $\omega_R$ its canonical module. Additionally, assume $R$ satisfies $G_0$ (i.e., it is Gorenstein at all minimal
primes). A ring $S$ of dimension $d-1$ is called a {\em doubling} of $R$ via $\psi$ if there exists  an integer $a>0$ and a short exact sequence of $R$-modules
\begin{equation}\label{eq:doubling}
0 \rightarrow \omega_{R}(-a)\stackrel{\psi}{\rightarrow} R \rightarrow S\rightarrow 0.
\end{equation}
By \cite[Proposition 3.3.18]{BH}, if $S$ is a doubling, then it is a Gorenstein ring.

\end{definition}

\section{General Artinian Gorenstein Rings of Fixed Waring Rank}

In this section we use apolarity to construct Artinian Gorenstein rings associated to sets of points in projective space.
Let $X=\{p_1,\ldots,p_t\}$ be a set of $t$ points in $\P^N$ and write $Q=k[x_0,\ldots,x_N]$ for the coordinate ring of $\P^N$.
For each point $p_i\in \P^N$ pick a representative $p_i=[a_0:\cdots : a_N]$ and define a linear form in $Q'=k[X_0,\ldots,X_N]$ by
$\ell_i=a_0X_0+\cdots +a_NX_N$. 
We denote
\begin{equation}\label{eq: FX}
F_X=\sum_{i=1}^t c_i \ell_i^{s} \qquad \text{ with } c_i\neq 0 \text{ for } 1\leq i\leq t.
\end{equation}
Since $k$ is algebraically closed the coefficients can be absorbed into the linear forms at the expense of changing representatives for the points $p_i$. This allows to take $F_X=\sum_{i=1}^t \ell_i^{s}$ moving forward.

 Using Macaulay inverse systems we can construct for each such form a Gorenstein ring $Q/\Ann(F_X)$  of socle degree $s$. 
 We shall see in \Cref{isDoubling}  below  that  these Gorenstein rings share the same homological properties regardless of the choice of representatives for $p_1, \ldots, p_t$ chosen above,  whence we abuse notation and write $F_X$ to highlight only the dependence on the set $X$.

The construction above 
corresponds to a regular map 
\begin{equation}\label{eq: psi}
\Psi_{t,s}: (\A^{N+1})^t\to Q'_s, \quad \Psi_{t,s}(X)=\ell_1^s+\cdots+\ell_t^s.
\end{equation}
The union over $t\geq 1$ of the images of  the  maps $\Psi_{t,s}$ covers  $Q'_s\setminus\{0\}$,
which leads to the following definition. 

\begin{definition}\label{def: Waring}
    The \emph{Waring rank} of a homogeneous element $F\neq 0$ of degree $s$ in a polynomial ring is the least positive integer $t$ such that $F=\ell_1^s+\cdots+\ell_t^s$ with $\ell_1,\ldots,\ell_t$ linear forms.  
\end{definition}

We turn our attention to sets of forms of a fixed degree and Waring rank.

\begin{notation}\label{W sets}
Let $W_{t,s}$ denote the set of forms in $Q'_s$ of Waring rank $t$ and  let  $W_{\leq t,s}=\rm{Im}(\Psi_{t,s})$ for the map $\Psi_{t,s}$ introduced in \eqref{eq: psi}. Then $W_{\leq t,s}$ represents the set of  forms in $Q'_s$ of Waring rank at most $t$. \end{notation}

In tensor theory the Zariski closure $\overline{ W_{\leq t,s}}$ is called the set of forms of {\em border rank at most $t$} and its projectivization  $\P\overline{ W_{\leq t,s}}$ is a classical object in algebraic geometry, the $t$-th secant variety of the $s$-uple Veronese embedding  of $\P^N$.

The goal of the section is to define general Artinian Gorenstein rings of specified Waring rank to correspond to general sets of points in $\P^N$.

\begin{definition}\label{general AG}
Let $\P U$ be a nonempty Zariski open subset of $(\P^N)^t$ and $U$ the preimage of $\P U$ in $(\A^{N+1})^t$. We define a {\em general graded Artinian Gorenstein ring of codimension $N+1$, Waring rank $t$ and socle degree $s$} to be any ring $Q/\Ann(F)$ whose Macaulay dual element satisfies $\Ann(F)_1=0$ (i.e., $F$ is concise) and
\begin{equation}\label{eq: general}
F\in W_{t,s}\cap \Psi_{t,s}(U).
\end{equation}

If $N=3$ and $\P U$ is the Zariski open subset of $(\P^3)^t$ consisting of points whose coordinate rings have Betti table as predicted by the Minimal Resolution Conjecture then we call the Artinian ring specified by $F$ as in \eqref{eq: general} {\em general with respect to Betti numbers}.
\end{definition}

In the next proposition we justify the terminology general by showing that the Macaulay dual forms of algebras in \Cref{general AG} form a (not necessarily open) dense subset of $W_{t,s}$, which contains a dense open set $D$ of $W_{t,s}$. 
This  is a departure from the usual convention that general sets are dense open subsets of algebraic varieties.  Since our results apply to all forms in \eqref{eq: general}, including those in $D$ (a general set in the conventional sense), we opt to take the larger set \eqref{eq: general} as our definition.

Before we discuss subsets of $W_{t,s}$,  it is worth clarifying its topological properties.
Recall that a set $C$ is {\em constructible} if it is a finite union of locally closed sets in the Zariski topology, that is, $C=\bigcup_{i=1}^m C_i\cap O_i$ where each $C_i$ is closed and $O_i$ is open.
 We recall below the well-known fact that $W_{t,s}$ is a constructible subset of $\mathbb A(Q'_s)$. 
 
\begin{prop}\label{general is dense}
Adopt the notation in \Cref{eq: psi} and \Cref{W sets} and suppose $U$ is  a nonempty Zariski open subset of $(\A^{N+1})^t$. Let $C_s$ denote the set of concise forms of degree $s$. If $t\geq N+1$ and $t(N+1)\leq \binom{N+s}{s}$, the set  $C_s\cap W_{t,s}\cap \Psi_{t,s}(U)$  contains a dense open subset  of  the nonempty constructible set $W_{t,s}$.
\end{prop}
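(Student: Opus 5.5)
Everything will be read off the map $\Psi_{t,s}$ of \eqref{eq: psi}. Its source $(\A^{N+1})^t$ is irreducible, so $\overline{W_{\leq t,s}}=\overline{\Psi_{t,s}((\A^{N+1})^t)}$ is an irreducible variety. By Chevalley's theorem $W_{\leq t,s}$ is constructible. Hence $W_{t,s}=W_{\leq t,s}\setminus W_{\leq t-1,s}$ is constructible as well, being a difference of constructible sets.

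\emph{Step 1: nonemptiness and density of $W_{t,s}$ in $\overline{W_{\leq t,s}}$.} The goal is $\dim \overline{W_{\leq t-1,s}}<\dim\overline{W_{\leq t,s}}$. Once this holds, $W_{t,s}$ contains the nonempty open subset $O_0=W^\circ\setminus \overline{W_{\leq t-1,s}}$ of $\overline{W_{\leq t,s}}$, where $W^\circ\subseteq W_{\leq t,s}$ is a dense open subset of the irreducible closure; such a $W^\circ$ exists because a constructible dense set contains a dense open set. Consequently $W_{t,s}$ is dense in $\overline{W_{\leq t,s}}$. For the strict dimension inequality I would argue directly: if the closures of the $(t-1)$ and $t$ images coincided, then every $F$ in $W_{\le t,s}$ would be a limit of rank $\le t-1$ forms. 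Iterating this, adding $\ell^s$ repeatedly to a $(t-1)$-secant variety would never increase it, so $\overline{W_{\leq t,s}}$ would be closed under adding powers. Since these powers span $Q'_s$, this forces $\overline{W_{\leq t,s}}=Q'_s$. That contradicts $\dim \overline{W_{\leq t,s}}\le t(N+1)$ unless $t(N+1)\ge\binom{N+s}{s}$. The boundary case $t(N+1)=\binom{N+s}{s}$ needs a separate check: here I would argue via the expected dimension (Terracini/Alexander–Hirschowitz), or simply note that the fiber dimension of $\Psi_{t,s}$ is at least the fiber dimension of $\Psi_{t-1,s}$ plus something. This is the main obstacle, and I would try to settle it with Terracini's lemma, which gives a strict increase of secant dimensions until the ambient space is filled.

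\emph{Step 2: the open set $U$.} The source is irreducible and $U$ is dense in it, so $\Psi_{t,s}(U)$ is a constructible dense subset of $\overline{W_{\leq t,s}}$. It therefore contains a dense open subset $O_1$ of that closure.

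\emph{Step 3: conciseness.} The nonconcise forms are those annihilated by some nonzero linear form. They form a closed subset of $Q'_s$, namely the forms expressible in $N$ variables after a linear change of coordinates. Its complement $C_s$ is open. It meets $\Psi_{t,s}(U)$ because $t\ge N+1$: choose $t$ points in $U$, which are general, so that $N+1$ of them are linearly independent. Then $\ell_1,\dots,\ell_{N+1}$ form a basis of $Q'_1$, and the contraction $x\circ F=s\sum_i x(p_i)\ell_i^{s-1}$ is nonzero for $x\ne0$. This uses that the $\ell_i^{s-1}$ are linearly independent for general points once $t\le\binom{N+s-1}{s-1}$; alternatively, take a suitable subset of coefficients, or use openness to perturb a single explicit concise example. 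So $O_2=C_s\cap\overline{W_{\leq t,s}}$ is a nonempty open subset of the irreducible closure, hence dense.

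Finally, $D=O_0\cap O_1\cap O_2$ is a nonempty open subset of the irreducible set $\overline{W_{\leq t,s}}$, hence dense. It is contained in $C_s\cap W_{t,s}\cap\Psi_{t,s}(U)$ and is open in $W_{t,s}$. It is dense in $W_{t,s}$ because $W_{t,s}$ lies in the irreducible closure.
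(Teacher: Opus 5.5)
Your proof is correct, and it differs from the paper mainly in Step~1. The paper obtains $\dim\overline{W_{\leq t-1,s}}<\dim\overline{W_{\leq t,s}}$ from the Alexander--Hirschowitz dimension formula \cite[Theorem 1.6]{IK}. You instead use the elementary fact that secant varieties cannot stabilize before they fill $Q'_s$.

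The boundary case you call the main obstacle is not one. Your argument shows that if $Z=\overline{W_{\leq t-1,s}}$ equals $\overline{W_{\leq t,s}}$, then $Z+\ell^s\subseteq Z$ for every linear form $\ell$. Since $0\in Z$, this forces $Z=Q'_s$. Now bound $Z$ rather than $\overline{W_{\leq t,s}}$: you get $\binom{N+s}{s}=\dim Z\leq (t-1)(N+1)<t(N+1)\leq\binom{N+s}{s}$, a contradiction. So neither Terracini nor Alexander--Hirschowitz is needed, and the ``fiber dimension'' alternative should be dropped. Your route also handles uniformly the defective cases $(t,s,N)=(5,4,2),(14,4,4),(7,3,4)$. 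These satisfy the hypotheses, and the paper's citation leaves them to be checked separately.

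The assembly is otherwise equivalent. The paper shrinks $U$ to $U\cap\Psi_{t,s}^{-1}(D'\cap C_s)$ and then takes a dense open subset of $\Psi_{t,s}(U)$. You intersect the three dense opens $O_0,O_1,O_2$ of the irreducible set $\overline{W_{\leq t,s}}$ directly, which is cleaner.

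You are also more explicit than the paper that $C_s$ must meet $\overline{W_{\leq t,s}}$, which is where $t\geq N+1$ is used. The simplest witness is $\sum_{i=0}^N X_i^s\in W_{\leq N+1,s}\subseteq W_{\leq t,s}$. Your linear-independence argument also works, but you should note that the needed bound $t\leq\binom{N+s-1}{s-1}$ follows from $t(N+1)\leq\binom{N+s}{s}=\frac{N+s}{s}\binom{N+s-1}{s-1}$.

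Finally, you assert without justification that the non-concise forms form a closed set. Give the paper's reason: $F$ is concise iff $\operatorname{Cat}_{1,F}\colon Q_1\to Q'_{s-1}$ has rank $N+1$, so the complement of $C_s$ is cut out by the maximal minors of the catalecticant.
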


\begin{proof}

For all positive integers $i$, the sets $W_{\leq i,s}$ are constructible by Chevalley's theorem as they are the images of the morphisms of finite type  $\Psi_{i,s}$ in \eqref{eq: psi}. Since 
\[
W_{t,s}=W_{\leq t,s}\setminus W_{\leq t-1,s}=W_{\leq t,s}\cap W_{\leq t-1,s}^c
\]
 we conclude that $W_{t,s}$ is constructible.  
  
 The Zariski closure $\overline{W_{\leq t,s}}$ is irreducible of dimension 
 \[
 \dim\overline{W_{\leq t,s}}= \min\left \{t(N+1),\binom{N+s}{s}\right \}
 \]
  except for the cases $s=2$ and  $(t,s,N)=(5,4,2), (9,4,3), (14,4,4), (7,3,4)$ where the dimension is known explicitly; see \cite[Theorem 1.6]{IK}. It follows from the hypothesis that $\dim\overline{W_{\leq t-1,s}}< \dim\overline{W_{\leq t,s}}$, thus $W_{t,s}=W_{\leq t,s}\setminus W_{\leq t-1,s}$ is nonempty. 
  
  By \cite[Theorem 10.19]{GW}, as $W_{t,s}$ is constructible, it contains a subset $D'\subseteq W_{t,s}$ such that $D'$ is a dense, hence nonempty, open subset of $\overline{W_{t,s}}$. 

For $F\in Q'_s$, consider the first
catalecticant map
$\operatorname{Cat}_{1,F}:Q_1\rightarrow Q'_{s-1}$, $q\mapsto q\circ F$.
By definition, $\ker(\operatorname{Cat}_{1,F})=\Ann_Q(F)_1$.
Thus $F$ is concise if and only if 
$\operatorname{Cat}_{1,F}$ has rank $N+1$, hence $C_s$ is a Zariski open set. It is nonempty as $\sum_{i=0}^N X_i^s\in C_s$.
  
  Put $U'=\Psi_{t,s}^{-1}(D'\cap C_s)$ and observe this is a nonempty open set as both $D'$ and $C_s$ are so. Replacing $U$ by its subset $U\cap U'$, which is a nonempty thus dense open set of $(\A^{N+1})^t$, we may now assume that $\Psi_{t,s}(U)\subseteq C_s\cap W_{t,s}$. Consequently we have
  \[
 C_s\cap  W_{t,s} \cap \Psi_{t,s}(U)=\Psi_{t,s}(U).
  \]
 Since $\Psi_{t,s}(U)$ is constructible by Chevalley's theorem, it contains a dense open subset $D$. We claim that $D$ is also dense in $W_{t,s}$. Indeed, as the image of a dense set through a continuous map is a dense set in the image of the map and since the image of $\Psi_{t,s}$ is $W_{\leq t,s}$ we have that $\overline{\Psi_{t,s}(U)}=\overline{W_{\leq t,s}}$. In view of the containments $\Psi_{t,s}(U)\subseteq W_{t,s}\subseteq W_{\leq t,s}$ and the choice of $D$ this leads to the desired equalities
 \[
 \overline{D}=\overline{\Psi_{t,s}(U)}=\overline{W_{t,s}}=\overline{W_{\leq t,s}}.\qedhere
 \]
\end{proof}

\section{Main results}

In this section we establish bounds on the rank of the internal multiplication maps on some $\Tor$ algebras. Then we show that these bounds are attained for general inputs in low codimension, thereby establishing our main theorem. These bounds are based on possible multiplication by degree unlike in \cite{LinkageClasses}.

\begin{lemma}\label{General-bounds}
    Given a  bi-graded $k$-algebra $A$, the rank of the multiplication has upper bound
    \begin{equation}\label{alg_bound_eq}
        \dim_k ([A_i]_d\cdot [A_j]_e) \leq \min(\dim_k [A_i]_d\cdot \dim_k[A_j]_e, \dim_k[A_{i+j}]_{d+e}) .
    \end{equation}
When $A$ is a graded-commutative algebra of characteristic not equal to $2$ with an additional internal grading and $i$ is odd the bound can be improved 
\begin{equation}\label{DG_bound_eq}
    \dim_k ([A_i]_d\cdot [A_i]_d) \leq \min\left(\textstyle \binom{\dim_k [A_i]_d}{ 2}\displaystyle, \dim_k[A_{2i}]_{2d}\right) .
\end{equation}
When $A$ is graded-commutative algebra $k$-algebra of top degree $c$ that has Poincare duality, then for any $i,j,\ell$ such that $i+j+\ell =c$ we have  
\begin{equation}\label{exception Poincare}
\rank_k(A_i\cdot A_j)\neq0\iff \rank_k(A_i\cdot A_\ell)\neq0\iff\rank_k(A_j\cdot A_\ell)\neq0.
\end{equation}
\end{lemma}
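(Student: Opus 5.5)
The three assertions of the lemma are essentially linear algebra, and I plan to prove them in order. For \eqref{alg_bound_eq}, the multiplication restricted to bidegrees gives a $k$-bilinear map $\mu:[A_i]_d\times[A_j]_e\to[A_{i+j}]_{d+e}$, so the product space $[A_i]_d\cdot[A_j]_e$, meaning the span of all products, is the image of the induced linear map $[A_i]_d\otimes_k[A_j]_e\to[A_{i+j}]_{d+e}$. If $\{u_a\}$ and $\{v_b\}$ are bases of the two factors, this image is spanned by the $\dim_k[A_i]_d\cdot\dim_k[A_j]_e$ products $u_av_b$. It is also a subspace of $[A_{i+j}]_{d+e}$. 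Both bounds follow at once, so the minimum bounds the rank.

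For \eqref{DG_bound_eq}, I take $i$ odd and use graded commutativity: $xy=(-1)^{i\cdot i}yx=-yx$ for $x,y\in[A_i]_d$. The sign depends only on the homological degree, and the internal grading just keeps products inside $[A_{2i}]_{2d}$. Setting $x=y$ gives $2x^2=0$, so $x^2=0$ because $\operatorname{char}k\neq 2$. For a basis $u_1,\dots,u_n$ of $[A_i]_d$, the products are therefore spanned by $u_au_b$ with $a<b$, since $u_au_a=0$ and $u_bu_a=-u_au_b$. There are $\binom{n}{2}$ such products. Alternatively, the multiplication factors through $\bigwedge^2[A_i]_d$. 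Combining this with containment in $[A_{2i}]_{2d}$ gives the bound.

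For \eqref{exception Poincare}, I interpret Poincar\'e duality as follows: $A_c\cong k$, and for each $a$ the pairing $A_a\times A_{c-a}\to A_c$ is nondegenerate. By symmetry it suffices to show $A_iA_j\neq0\Rightarrow A_iA_\ell\neq 0$; the remaining implications follow by permuting the roles of $i,j,\ell$, using graded commutativity so that the order of factors does not matter up to sign. Suppose $xy\neq0$ with $x\in A_i$ and $y\in A_j$. Then $xy\in A_{i+j}=A_{c-\ell}$ is nonzero, so nondegeneracy gives $z\in A_\ell$ with $(xy)z\neq 0$. By associativity and graded commutativity, $x(yz)=\pm x(zy)=\pm(xz)y\neq0$, hence $xz\neq0$ and $A_iA_\ell\neq 0$. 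The same computation also shows $yz\neq0$, giving $A_jA_\ell\neq0$ directly.

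There is no real obstacle here. The only points needing care are making explicit the meaning of ``Poincar\'e duality'' (a nondegenerate pairing into a one-dimensional top degree) and checking that the signs from graded commutativity do not affect nonvanishing.
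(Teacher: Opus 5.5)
Your proposal is correct and follows essentially the same route as the paper's proof. The first bound comes from the induced linear map $[A_i]_d\otimes_k[A_j]_e\to[A_{i+j}]_{d+e}$, the second from the odd-degree multiplication factoring through $\bigwedge^2[A_i]_d$, and the third from choosing $z\in A_\ell$ with $(xy)z\neq 0$ via Poincar\'e duality and concluding $xz\neq 0$ and $yz\neq 0$. Your explicit derivation of $x^2=0$ from $\operatorname{char}k\neq 2$, and your bookkeeping of signs and associativity, just fill in details the paper leaves implicit.
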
 

\begin{proof}
    The multiplication $A_i\otimes A_j\to A_{i+j}$  is a $k$-linear map so its rank is bounded above by the ranks of the source and target specifically in each internal degree. This gives the first inequality. In a DG-algebra if $i$ is odd $A_i\cdot A_i\sube \bigwedge^2A_i$ and since $$\dim_k \bigwedge ^2[A_i]_d= \binom{\dim_k[A_i]_d}{ 2}\leq \left(\dim_k[A_i]_d\right)^2,$$ combined with the first inequality this gives the second. 

    If $\rank_k(A_i\cdot A_j)\neq0$ there exists $x\in A_i$, $y\in A_j$ such that $xy\neq 0\in A_{i+j}$. As $i+j+\ell =c$, from Poincare duality there is some $z\in A_\ell$ such that $(xy)z \neq 0$ in $A_c$ therefore $xz\neq 0$ and $yz\neq 0$. This implies $\rank_k A_iA_\ell \neq 0$ and $\rank_k A_jA_\ell\neq0$.
\end{proof}

We single out the extremal algebras with respect to the preceding inequalities.

\begin{definition}
   A DG algebra has {\it maximum multiplication rank} if it attains equality in \eqref{DG_bound_eq} for every odd $i$ and every $d$ and it attains equality in \eqref{alg_bound_eq} for all cases not covered by \eqref{DG_bound_eq} subject to exceptions arising from \eqref{exception Poincare}.   
\end{definition} 
   
    \Cref{ex: codim 4 p q bounds} $t=9$ is an instance where the exceptions in \eqref{exception Poincare} are relevant.

\subsection{Tor Algebras in codimension three}

We continue by  studying Tor algebras of coordinate rings of points in $\P^3$ general with respect to their Betti numbers and we show that they have maximum multiplication rank. The Betti tables of such rings are outlined in  \eqref{Betti_codim3}, allowing us to put \Cref{General-bounds} in action.

\begin{lemma}\label{bounds_for_codim3}
    For a  ring $R$ with Betti table as in \eqref{Betti_codim3} the invariants for the associated Tor algebra defined in \Cref{notation_for_Tor_invariants} satisfy   $$\begin{cases}
    p\leq \min\left(\textstyle{\frac{\alpha_1(\alpha_1-1)}{2}},\beta_2\right), \, q\leq \min(\alpha_1\cdot\alpha_2,\beta_3) & \text{if }\reg(R)=2\\ 
    p=0,\, q=0 & \text{if }\reg(R)\neq 2. \end{cases}$$ 
\end{lemma}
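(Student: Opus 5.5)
The plan is to apply \Cref{General-bounds} one internal degree at a time, using the shape of the Betti table \eqref{Betti_codim3}. Write $\mu=\mu(t)$. Then the internal degrees occurring in $A=\Tor^Q(R,k)$ are as follows: $A_1$ lives in degrees $\mu$ and $\mu+1$ (entries $\alpha_1,\beta_1$), $A_2$ in degrees $\mu+1,\mu+2$ (entries $\alpha_2,\beta_2$), and $A_3$ in degrees $\mu+2,\mu+3$ (entries $\alpha_3,\beta_3$). Multiplication is bigraded, so $[A_1]_d\cdot[A_1]_e\subseteq[A_2]_{d+e}$. The key observation is a degree count. A product of two elements of $A_1$ has internal degree at least $2\mu$, while $A_2$ is concentrated in degrees $\le \mu+2$. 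Similarly, products in $A_1\cdot A_2$ have degree at least $2\mu+1$, while $A_3$ lives in degrees $\le\mu+3$.

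First I treat the case $\mu\ge 3$. Then $2\mu>\mu+2$ and $2\mu+1>\mu+3$, so every graded piece of $A_1\cdot A_1$ and of $A_1\cdot A_2$ lands in a zero component, giving $p=q=0$. The regularity here is at least $\mu\ge 3$, so $\reg(R)\ne 2$. The degenerate case $\mu=1$ does not occur for $t\ge4$. I would also cover rows being empty: if $\reg(R)\neq 2$ with $\mu=2$, then the $\mu$ row vanishes, so $\beta_i=0$ and the bounds below force $p=q=0$. Here I would use $\reg(R)=d(t)\in\{\mu-1,\mu\}$ and read the regularity as the top nonzero row, so for $\mu=2$ the condition $\reg(R)\ne2$ means $\beta_2=\beta_3=0$.

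Next I treat $\mu=2$ (equivalently $\reg(R)=2$, after the remark above). Now $A_1\cdot A_1$ can only be nonzero as $[A_1]_2\cdot[A_1]_2\to[A_2]_4$, since degrees $5$ and $6$ exceed the top degree $4$ of $A_2$. Applying \eqref{DG_bound_eq} with $i=1,d=2$ gives $p\le\min(\binom{\alpha_1}{2},\beta_2)$. Likewise $A_1\cdot A_2$ can only be nonzero as $[A_1]_2\cdot[A_2]_3\to[A_3]_5$, because every other pairing has degree at least $6>5$. Applying \eqref{alg_bound_eq} then gives $q\le\min(\alpha_1\alpha_2,\beta_3)$.

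The main point needing care is bookkeeping rather than a real obstacle. One must confirm that $p$ and $q$ are sums over internal degrees of the graded ranks, which holds because the multiplication is bihomogeneous. One must also justify identifying the $\reg(R)\ne2$ case with $\mu\ne2$ or a vanishing top row; I expect this identification to take the most attention.
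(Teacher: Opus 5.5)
Your proposal is correct and is essentially the paper's argument. You read off the internal degrees of $A_1,A_2,A_3$ from the two-row Betti table, observe that the only products that can survive are $[A_1]_2\cdot[A_1]_2\to[A_2]_4$ and $[A_1]_2\cdot[A_2]_3\to[A_3]_5$, and apply \Cref{General-bounds} to these; the only difference is that you split cases by $\mu$ where the paper splits by $\reg(R)$.

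There is one small slip. Since $\reg(R)\in\{\mu-1,\mu\}$, $\mu\ge 3$ does not force $\reg(R)\neq 2$: for example, $t=10$ gives $\mu=3$ and $\reg(R)=2$ with the $\mu$-row zero. This is harmless, because you show $p=q=0$ for every $\mu\ge 3$, and that satisfies both branches of the statement.
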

 \begin{proof}
 First we will consider the $\reg(R)=2$ case. From \eqref{Betti_codim3} $A_1$ is concentrated in internal degrees 2 and 3 and $A_2$ is concentrated in internal degrees 3 and 4. Applying \Cref{General-bounds} with $i=j=1$ and $d=2$ and $e=3$ we get $\rank_k([A_1]_2\cdot [A_1]_3)\leq 0$ likewise for $i=1$ and $d=3$ we get $\rank_k([A_1]_3\cdot [A_1]_3)\leq 0$. So the nonzero multiplication occurs only when $i=1$ and $d=2$; applying \Cref{General-bounds} gives the first inequality. Similarly for $A_1\cdot A_2$ as $A_3$ is concentrated in degrees 4 and 5, the multiplication is nonzero only when $e+d=4,5$. This happens only for $[A_1]_2\cdot [A_2]_3\to  [A_3]_5$ which by \Cref{General-bounds} gives the second inequality.  If $\reg(R)=1$ or $\reg(R)>2$ the products of elements in $A_{\geq 1}$ must be zero by \Cref{General-bounds} as in both cases the multiplication will land in  internal degrees forced to be zero by the regularity.

 \end{proof}
 
 \begin{example}\label{ex: pq values}
Using the Betti tables in \Cref{fig_Betti_tables} one can calculate the maximum values of $p$ and $q$ allowed by \Cref{bounds_for_codim3}:

\begin{center}
\begin{tabular}{c|c c c c c c}
        $t=$& $4$&5&6&7&8&9\\
    \hline
     $ p \leq$& 0&0&3&3&1&0  \\
     $ q \leq$& 0&1&2&0&0&0
\end{tabular}
\end{center}
\end{example}

Interpreting the second case of \Cref{bounds_for_codim3}
 in conjunction with \Cref{fig: codim 3}  yields the following.

\begin{corollary}\label{reg 2}
A ring $R$ with Betti table as in \eqref{Betti_codim3} and $\reg(R)\neq2$ is in class $\rm H(0,0)$. 
\end{corollary}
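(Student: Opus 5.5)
By \Cref{bounds_for_codim3}, when $\reg(R)\neq 2$ we have $p=\rank_k(A_1\cdot A_1)=0$ and $q=\rank_k(A_1\cdot A_2)=0$. The plan is therefore to read off the class of $R$ from \Cref{fig: codim 3} using only the invariants $p$, $q$, and $r$.

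First we show that $r=0$. The map $\delta\colon A_2\to\Hom_k(A_1,A_3)$ sends $f$ to multiplication by $f$, that is, to $e\mapsto ef$. Since $A_1\cdot A_2=0$, every such map is zero, so $\delta=0$ and $r=0$.

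Next we compare the triple $(p,q,r)=(0,0,0)$ with the classes in \Cref{fig: codim 3}:
\begin{itemize}
\item $\C(3)$ has $p=3$, $\T$ has $p=3$, and $\B$ has $p=1$, so all three are excluded.
\item $\G(r)$ requires $q=1$, so it is excluded.
\item $\H(p,q)$ has invariants $(p,q,q)$, so the triple $(0,0,0)$ forces $\H(0,0)$.
\end{itemize}
The only coincidence of invariants among the classes is between $\T$ and $\H(3,0)$, and both have $p=3$, so it does not arise here.

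Since every codepth three ring falls into one of the listed classes, this forces $R\in\H(0,0)$. One detail needs checking: $R$ has codepth $3$, so the classification applies. This holds because the Betti table \eqref{Betti_codim3} has length $3$ and $R$ is Cohen–Macaulay (points in $\P^3$).

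The proof has no real obstacle. The only point needing care is to state that trivial $A_1\cdot A_2$ kills $\delta$, giving $r=0$.
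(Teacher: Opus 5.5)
Your proposal is correct and follows the paper's argument: \Cref{bounds_for_codim3} gives $p=q=0$, and reading off \Cref{fig: codim 3} yields $\H(0,0)$. Your explicit derivation of $r=0$ is harmless but not needed, since $\H(0,0)$ is already the only class in the table with $p=q=0$. One small correction: that $R$ has codepth $3$ follows from the length-$3$ minimal resolution with no linear generators via Auslander--Buchsbaum, so you need neither Cohen--Macaulayness nor the ``points in $\P^3$'' assumption, which the corollary does not make.
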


The following result proves the codimension three case of our main theorem based on the discussion following \Cref{fig: codim 3}.

\begin{theorem}\label{ClassThmForJ}
   The coordinate ring $R$ of $t$ general points in $\P^3$  can be classified by the value of $t$ as given in \Cref{full_theorem}, and its associated Tor algebra has maximum multiplication rank. 
\end{theorem}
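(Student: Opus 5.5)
Since $t$ general points may be assumed general with respect to Betti numbers, Ballico--Geramita's proof of the MRC in $\P^3$ gives the Betti table \eqref{Betti_codim3}. For $4\le t\le 9$ we have $\mu(t)=2$, and the tables are those of \Cref{fig_Betti_tables}. I split into cases according to $\reg(R)=d(t)$.

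\textbf{Case $\reg(R)\neq 2$.} This covers $t<4$ (excluded or trivial), $t=4$ where $d(4)=1$, and $t\ge 11$ where $d(t)\ge 3$. Then \Cref{reg 2} gives class $\H(0,0)$ directly. Maximum multiplication rank is also immediate: every bound in \Cref{General-bounds} is $0$ in each internal degree, because the products land in degrees where the table vanishes.

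\textbf{Case $\reg(R)=2$, i.e.\ $5\le t\le 10$.} For $t=9,10$, \Cref{bounds_for_codim3} forces $p=q=0$ (for $t=10$ one has $\alpha_1=0$), so $R$ is in class $\H(0,0)$ and has maximum multiplication rank.

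For $t=5$ the ring is Gorenstein with $l=5$, hence of class $\G(5)$ and $q=1$. This attains the bound $q\le 1$, and $p=0$ is forced.

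For $t=6$ the ring is an almost complete intersection ($l=4$), Cohen--Macaulay, with $n=2$. By \Cref{aci-class} it is of class $\H(3,2)$, so $p=3$ and $q=2$, which attains both bounds of \Cref{ex: pq values}.

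For $t=7$ we have $l=4$ and $n=3$ odd, so \Cref{aci-class} gives class $\T$, with $p=3$ and $q=0$. Again the bounds are attained.

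For $t=8$ we need $p=1$ and $q=0$, and among the classes in \Cref{fig: codim 3} this is only $\H(1,0)$. The real content is showing $p\neq 0$, i.e.\ that the two quadrics generating $[A_1]_2$ have nonzero product in $[A_2]_4$.

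\textbf{Main obstacle: $p\neq 0$ for $t=8$.} I would argue as follows. The product $e_1e_2$ of the Koszul classes of the two quadric generators $a,b$ is zero in $\Tor$ exactly when the Koszul relation $(b,-a)$ is a boundary modulo $\mathfrak m$. Equivalently, it is zero when the degree-4 syzygy $be_1-ae_2$ lies in $\mathfrak m\cdot \operatorname{Syz}$. Since $\beta_2=9$ with no linear second syzygies ($\alpha_2=0$), and the minimal second syzygies all live in degree $4$, the Koszul syzygy of degree $4$ is nonzero modulo $\mathfrak m\,\operatorname{Syz}_{\le 3}=0$. Hence it is a minimal generator, so $p=1$. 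The same argument handles $t=9$ trivially, since there $\alpha_1=1$.

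It remains to check maximum multiplication rank in each case by comparing the computed $p,q$ and the $r$-independent products with the bounds per internal degree from \Cref{bounds_for_codim3}. For $\G(5)$, the only potential exception involves $A_1\cdot A_1$, and it is governed by \eqref{exception Poincare}.
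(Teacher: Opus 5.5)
Your proposal is correct and is essentially the paper's proof. It uses the same case split on the Ballico--Geramita Betti tables, \Cref{bounds_for_codim3} for $t=4$, $t=9$ and $t\ge 10$, and the Gorenstein and almost complete intersection classifications for $t=5,6,7$. For $t=8$ it rests on the same key point: the degree-four Koszul syzygy on the two quadrics is a minimal syzygy because there are no degree-three syzygies. You phrase this through $\partial(e_1e_2)$ in a DG algebra resolution rather than through the comparison map from the Koszul complex, which amounts to the same thing.

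There are two small slips, neither affecting the conclusions. For $t=10$ it is row one of the Betti table that vanishes, not $\alpha_1$; since $\mu(10)=3$, the entry $\alpha_1$ counts the ten cubic generators, though $p=q=0$ still follows. For $t=5$ no appeal to \eqref{exception Poincare} is needed, because the bound on $A_1\cdot A_1$ is already $0$ as $[A_2]_4=0$.
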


\begin{proof}
We use \Cref{notation_for_Tor_invariants} for $R=Q/J$, where $Q$ is the coordinate ring of $\P^3$ and $J$ is the defining ideal of $t$ general points in $\P^3$ with respect to Betti numbers.

When $t=4$ \Cref{ex: pq values} yields $p=q=0$ and thus from  \Cref{fig: codim 3} we infer that $R$ is in class $\H(0,0)$.

When $t=5$ by examining \Cref{fig_Betti_tables} we have $n=\rank_kA_3=1$, thus $R$ is Gorenstein, and ${l}=\rank_kA_1=5$, 
thus $R$ is not a complete intersection. By \cite[1.4.2.]{Avramov} $R$ is of class $\G({l})=\G(5)$.

When $t=6$ or 7 from \Cref{fig_Betti_tables} we have ${l}=4 $ 
so $R$ is an almost complete intersection. By
\Cref{aci-class}, when $t=6$, $R$ is in class $\H(3,2)$ and when $t=7$, $R$ is in class $\T$.

When $t=8$, from \Cref{fig_Betti_tables} the ideal $J$ has exactly two quadratic generators. Let $f, g$ be these generators and let $F$ denote a minimal free resolution of $Q/J$ over $Q$. Since $Q/J$ has codimension three, $F$ has a DG-algebra structure. Let $e_1,e_2$ be generators of $[F_1]_2$ such that $\partial(e_1)=f$ and $\partial(e_2)=g$. Consider the Koszul complex $K$ on $f,g$. There is a $DG$-algebra morphism $K\to F$ taking $Q$ to $F_0=Q$ identically and the generators of $K$ as an exterior algebra to $e_1, e_2$. This induces a DG algebra map in homology $\phi:H(K^k)\to A$, where $K^k=K\otimes_{Q}k$.

 The degree-four Koszul syzygy on $f,g$ is a minimal generator of the module of second syzygies on $Q/J$ since there are no degree-three second syzygies as evidenced by \Cref{fig_Betti_tables}; therefore its class in $A$ remains nonzero after tensoring with $k$. Since $\phi$ takes Koszul syzygies to Koszul syzygies $\phi(H_2(K^k))$ is not zero. Additionally $\phi(H_2(K^k))=\bar e_1\cdot \bar e_2$ where $\bar e_i$ is the image of $e_i$ in $A_1$. Thus we have a nonzero multiplication in $A_1\cdot A_1$ so $p\geq 1$. From \Cref{bounds_for_codim3} $p\leq 1$ and $q= 0$, thus $p=1$ and $R$ is of class $\H(1,0)$. 
 
 When $t=9$, \Cref{bounds_for_codim3} give $p=0$ and $q=0$ so $R$ is in class $\H(0,0)$. When $t\geq 10$, we have $\mu(t)\geq 3$ in \Cref{Betti_codim3}. Specifically either $\reg(R)>2$ or row one of the Betti table is trivial, i.e. $A_{ij}=0$ for all $j-i=1$. Applying \Cref{bounds_for_codim3} we get $p=q=0$ thus class $\H(0,0)$ for $t\geq 10$.
 \end{proof}

\subsection{Tor Algebras in codimension four}

In the remainder of the section we classify the Tor Algebras of the rings in \Cref{general AG}. 

Let $X$ be a finite set of points in projective space and write $F_X=\ell_1^{s}+\cdots \ell_t^{s}$ as in \eqref{eq: FX}. Under appropriate conditions it follows from results of Boij \cite{Boij} that the Betti numbers of $Q/\Ann(F_X)$ can be deduced from those of the coordinate ring of $X$ via a doubling construction.  Since the paper \cite{Boij} does not use the language of Macaulay inverse systems, we state a version of the results therein adapted to our terminology. The following results are applicable to arbitrary (not necessarily general) sets of points in projective space. See \cite[Theorem 4.8]{AG-WaringRk}  for a version of the following result specific to $R$ being the coordinate ring of general points.

\begin{prop}[Boij \cite{Boij}] \label{isDoubling}  
Let $X$ be a set of points in $\P^{N}$ with coordinate ring $R$. Adopt \eqref{eq: FX} and set $S=Q/\Ann(F_X)$. If $s\geq 2\reg(R)-1$ then $S$ is a doubling of $R$. Moreover, if $s\geq 2\reg(R)$, their Betti numbers are related by $$b_{ij}(S)=b_{ij}(R)+b_{N+1-i,s+N+1-j}(R).$$

\end{prop}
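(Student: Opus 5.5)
The plan is to exhibit the short exact sequence \eqref{eq:doubling} explicitly, and then to read off Betti numbers from a mapping cone. Write $J=I(X)\subseteq Q$, so $R=Q/J$ is one-dimensional Cohen--Macaulay and reduced. Since $R$ is reduced, it satisfies $G_0$. We use three standard facts about $X$: $\reg(R)=\max\{i: H_R(i-1)<t\}$ in the paper's indexing; the Hilbert function of $R$ equals $t$ for $i\geq \reg(R)-1$; and $J_i$ is exactly the set of forms of degree $i$ vanishing on $X$.

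\textbf{Step 1: $\Ann(F_X)$ inside $J$.} First compare $\Ann(F_X)$ with $J$. For a form $f$ of degree $i\le s$ one has $f\circ \ell_j^s=\frac{s!}{(s-i)!}f(p_j)\ell_j^{s-i}$. Hence
\[
f\circ F_X=\frac{s!}{(s-i)!}\sum_j f(p_j)\ell_j^{s-i}.
\]
This gives $J\subseteq \Ann(F_X)$. Now suppose $s-i\geq \reg(R)-1$. Then the powers $\ell_j^{s-i}$ are linearly independent, since $X$ imposes independent conditions in degree $s-i$. So in that range $\Ann(F_X)_i=J_i$. Consequently $I:=\Ann(F_X)/J$ is an ideal of $R$ vanishing in degrees $\le s-\reg(R)+1$.

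\textbf{Step 2: $I$ is a canonical module.} Next identify $I$ with a shifted canonical module $\omega_R(-a)$. The canonical module $\omega_R=\Ext^N_Q(R,Q(-N-1))$ of a reduced set of points has a concrete description: it is isomorphic to the graded dual of $R$ localized suitably, or equivalently, $\omega_R\cong \Hom_R(R,\omega)$ with Hilbert function $t-H_R(-i)$. I plan to use Boij's (and Macaulay's) description: the inverse system of $R$ in high degrees is spanned by the powers $\ell_j^{s-i}$. From this, $S=Q/\Ann(F_X)$ is the Gorenstein ring whose inverse system is generated by $F_X$. The ideal $I=\Ann_R(F_X)$ is then the annihilator in $R$ of an element of the injective hull.

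Duality for the one-dimensional CM ring $R$ gives $\Hom_R(S,\omega_R)=\omega_S=S(a')$ and $I\cong \Hom_R(\omega_R,\omega_R(-a))$. The plan for this identification is as follows. Apply $\Hom_R(-,\omega_R)$ to $0\to I\to R\to S\to 0$. Since $S$ has dimension $0$, $\Hom_R(S,\omega_R)=0$ and $\Ext^1_R(S,\omega_R)\cong \omega_S\cong S(s)$. This yields
\[
0\to\omega_R\to\Hom_R(I,\omega_R)\to S(s)\to 0.
\]
The key claim is that the generator of $S(s)$ lifts to an element making $\Hom_R(I,\omega_R)\cong R(b)$ free of rank one. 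Equivalently, $I$ is a maximal CM module of rank one that is isomorphic to $\omega_R(-a)$.

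I would prove this claim by a Hilbert function count, which is where $s\geq 2\reg(R)-1$ enters. Set $a=s+1$ (shift conventions to be fixed). By Step 1, $I$ and $\omega_R(-a)$ are both zero in the low degrees $\le s-\reg(R)+1$. Both have Hilbert function $t$ in high degrees, and the symmetric Hilbert function of $S$ forces them to agree everywhere. The hypothesis $s\ge 2\reg(R)-1$ ensures that the ranges where $H_R$ is not yet $t$, and where $H_S$ is mirrored, do not overlap. Finally, one constructs an injective map $\omega_R(-a)\to R$ with image $I$ by sending the generating dual functionals to the $\ell_j$-evaluations. This step, realizing the map $\psi$ concretely and checking that its image is exactly $\Ann(F_X)/J$, is the main obstacle. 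I would first try to do it locally at each point, where $\omega_R$ is free of rank one, and then glue using the degree bounds.

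\textbf{Step 3: Betti numbers.} For the Betti formula, take the mapping cone of a lift of $\psi$ to a map from the dual resolution $\Hom_Q(F,Q(-N-1))(-a)$, which resolves $\omega_R(-a)$, to $F$. This cone resolves $S$, with $b_{ij}$ given by the sum in the statement. The only possible nonminimality is a degree-zero component of the comparison maps. Shifts $j$ of $F_i$ lie in $[i+\mu-1,i+\reg(R)]$, while those of the dual resolution are $s+N+1-j'$. When $s\ge 2\reg(R)$, these ranges are disjoint in each homological degree, so no unit entries occur. Hence the cone is minimal, giving $b_{ij}(S)=b_{ij}(R)+b_{N+1-i,s+N+1-j}(R)$.
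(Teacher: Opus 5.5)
Your Step 3 is essentially sound once the shift is corrected to $a=s$. It is the standard degree comparison: for $s\ge 2\reg(R)$, the comparison maps from the dualized, shifted resolution of $\omega_R(-s)$ to the resolution of $R$ have no unit entries. This is what the paper takes from Boij's Proposition 3.5.

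The gap is Step 2, which is the entire content of the doubling assertion; the paper simply cites Boij's Theorem 3.4 for it.
\begin{itemize}
\item Equality of the Hilbert functions of $I=\Ann(F_X)/J$ and $\omega_R(-a)$ does not give an isomorphism.
\item You never prove that $\Hom_R(I,\omega_R)$ is cyclic.
\item The map $\psi$ is left as ``the main obstacle'', supported only by a local-then-glue heuristic.
\end{itemize}
The bookkeeping feeding into Step 2 also has errors.
\begin{itemize}
\item Since $\Hom_R(\omega_R,\omega_R)\cong R$, your assertion $I\cong\Hom_R(\omega_R,\omega_R(-a))$ would make $I$ free.
\item The Betti formula forces $a=s$, not $s+1$.
\item Step 1 is off by one. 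One has $H_R(i)=t$ only for $i\ge\reg(R)$, so your two ``standard facts'' contradict each other, and $\Ann(F_X)_i=J_i$ is guaranteed only for $i\le s-\reg(R)$. For two points of $\P^1$ and $s=1$, one has $\Ann(F_X)_1\neq 0=J_1$.
\end{itemize}

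What is missing is a global model of $\omega_R$ in which the comparison becomes a computation. Here is one way to supply it.
\begin{itemize}
\item Choose $L\in Q_1$ vanishing at no $p_j$, and rescale representatives so that $L(p_j)=1$; the coefficients $c_j$ change but stay nonzero.
\item Then $L$ is regular on $R$, $R$ is a finite free graded $k[L]$-module, and $R\subseteq R_L$ with $(R_L)_d\cong k^t$ via evaluation at the points.
\item Set $\tau(h)=\sum_j c_j h(p_j)L^{\deg h}$. The pairing $(h,g)\mapsto\tau(hg)$ is a perfect $k[L^{\pm1}]$-bilinear pairing on $R_L$; this uses that $X$ is reduced and that every $c_j\neq 0$.
\item Hence $\omega_R\cong\Hom_{k[L]}(R,k[L](-1))\cong C(-1)$, where $C=\{h\in R_L:\tau(hR)\subseteq k[L]\}$.
\item By apolarity, $f\in R_i$ lies in $\Ann(F_X)/J$ if and only if $\tau(fg)=0$ for all $g\in R_{s-i}$. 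Unwinding the definition of $C$, using multiplication by powers of $L$, shows this is exactly the condition $f\in L^{s+1}C$. So $\Ann(F_X)/J=R\cap L^{s+1}C$.
\item Finally, $L^{s+1}C\subseteq R$. In degrees $i\ge\reg(R)$ one has $(R_L)_i=R_i$. For $i<\reg(R)$, the hypothesis $s\ge 2\reg(R)-1$ gives $R_{s-i}\cong k^t$, which forces every degree-$i$ element of $L^{s+1}C$ to vanish.
\end{itemize}
Therefore $\Ann(F_X)/J=L^{s+1}C\cong C(-s-1)\cong\omega_R(-s)$, which is the doubling sequence with $a=s$. This last containment is exactly where $s\ge 2\reg(R)-1$ is used, not in a Hilbert function count.
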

\begin{proof} 

 The statement regarding doubling is \cite[Theorem 3.4]{Boij}. The formula for the Betti numbers follows from \cite[Proposition 3.5]{Boij}. 
\end{proof}

\begin{example}\label{ex}
If $X$ is a set of points in $\P^3$ and the coordinate ring of $X$ has Betti table \eqref{Betti_codim3}, then from \Cref{isDoubling}, for $s\geq 2\reg(R)$ the ring $S=Q/\Ann(F_X)$  has Betti table as follows: 
\begin{equation}\label{Betti_codim4}
\begin{tabular}{r c c c c c}
 & 0 & 1 & 2 & 3 & 4 \\
total: & 1 & $ m+1$ & $2m$ & $m+1$ &1\\
0 : & 1 & . & . & .&. \\
$\mu-1$ : & . & $\alpha_1$ & $\alpha_2$ & $\alpha_3$ &.\\
$\mu$ : & . & $\beta_1$ & $\beta_2$ & $\beta_3$ &.\\
\\
$s-\mu$ : & . & $\beta_3$ & $\beta_2$ & $\beta_1$ &.\\
$s-\mu+1$ : & . & $\alpha_3$ & $\alpha_2$ & $\alpha_1$ &.\\
$s$:& .& .& . & . & 1
\end{tabular}
\end{equation}

We write $\mu$ in \eqref{Betti_codim4} instead of $\mu(t)$ when the number of points is unspecified. 
\end{example}

We use the Betti table in  \eqref{Betti_codim4}  to bound the structural constants of  $\Tor(S,k)$. Importantly we want to make sure that the multiplication within the two rows $\mu-1$ and $\mu$ cannot land in rows $s-\mu$ or $s-\mu+1$. 
This leads to the restriction on $s$ in the following lemma.

\begin{lemma} \label{bounds_for_codim4}
    Consider a ring with Betti table as in \eqref{Betti_codim4} resulting from doubling, with socle degree $s\geq 2\mu$ and $s\notin\{3\mu-3,3\mu-2,3\mu-1,3\mu\}$. 
    
    Then the invariants of its Tor algebra from \Cref{Notation Gor tor invariants} are bounded as follows
    \[
\begin{cases}
\begin{aligned}
\bar p
  &\leq
  \min\left\{\frac{\alpha_1(\alpha_1-1)}{2},\beta_2\right\}
  +\min\{\alpha_1\beta_3,\alpha_2\},\\
\bar q
  &\leq
  \min\{\alpha_1\alpha_2,\beta_3\}+\alpha_1
\end{aligned}
& \text{if }\mu=2\text{ and }\beta_3\neq0,\\[1ex]
\bar p=0,\quad \bar q=0
& \text{otherwise.}
\end{cases}
\]
Moreover, if either $\bar p=0$ or $\bar q=0$, then $\bar p=\bar q=0$.

\end{lemma}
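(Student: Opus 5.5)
The proof is a degree count on the Betti table \eqref{Betti_codim4}, fed into \Cref{General-bounds}. I would first record where each graded piece of $A=\Tor^Q(S,k)$ lives.

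\textbf{Where the graded pieces live.}
\begin{itemize}
\item $A_1$ sits in internal degrees $\mu$ (dimension $\alpha_1$), $\mu+1$ (dimension $\beta_1$), $s-\mu+1$ (dimension $\beta_3$) and $s-\mu+2$ (dimension $\alpha_3$).
\item $A_2$ sits in degrees $\mu+1$ (dimension $\alpha_2$), $\mu+2$ (dimension $\beta_2$), $s-\mu+2$ (dimension $\beta_2$) and $s-\mu+3$ (dimension $\alpha_2$).
\item $A_3$ sits in degrees $\mu+2$ (dimension $\alpha_3$), $\mu+3$ (dimension $\beta_3$), $s-\mu+3$ (dimension $\beta_1$) and $s-\mu+4$ (dimension $\alpha_1$).
\end{itemize}
Call the first two degrees of each $A_i$ the "low" part and the last two the "high" part.

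\textbf{Which products can be nonzero.} A product of two low pieces has internal degree between $2\mu$ and $2\mu+3$ (for $A_1A_1$ and $A_1A_2$). It must land either in a low piece of the target or in a high piece.

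Low-to-low products: $[A_1]_\mu[A_1]_\mu\to[A_2]_{2\mu}$ requires $2\mu\in\{\mu+1,\mu+2\}$, so $\mu=2$, and it then lands in the $\beta_2$ part. Similarly $[A_1]_\mu[A_2]_{\mu+1}\to[A_3]_{2\mu+1}$ requires $\mu=2$ and lands in the $\beta_3$ part. This reproduces \Cref{bounds_for_codim3}.

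Low-to-high products: these need equations such as $2\mu=s-\mu+2$, $2\mu+1=s-\mu+3$, and so on. Listing them, the offending values are exactly $s\in\{3\mu-3,\dots,3\mu\}$ (one should check that $3\mu+1$ and $3\mu-4$ are outside the list for the relevant pieces), and the hypothesis excludes these.

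Mixed products (low times high): for $A_1\cdot A_1$, the product $[A_1]_\mu\cdot[A_1]_{s-\mu+1}$ has degree $s+1$. It can land in $[A_2]_{s-\mu+2}$ or $[A_2]_{s-\mu+3}$ only when $\mu=1$ or $\mu=2$ respectively. With $\mu=2$ it lands in the $\alpha_2$ part of degree $s-\mu+3=s+1$, giving the term $\min\{\alpha_1\beta_3,\alpha_2\}$ by \eqref{alg_bound_eq}. All other mixed degree combinations miss the support of $A_2$, using $s\ge 2\mu$ to rule out coincidences.

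High times high: these have degree at least $2s-2\mu+2>s$, beyond the top degree $s+4$ of nonzero $A_{\le 3}$ products except for trivial cases. Since $s\geq 2\mu$, they vanish.

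\textbf{The bound on $\bar q$.} The mixed term $[A_1]_\mu\cdot[A_2]_{s-\mu+2}$ lands in $[A_3]_{s+2}=[A_3]_{s-\mu+4}$ when $\mu=2$, which has dimension $\alpha_1$. This gives the "$+\alpha_1$". The remaining cross terms miss by degree.

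\textbf{When $\beta_3=0$ or $\mu\ne2$.} Nothing survives when $\mu\neq 2$ (the case $\mu=1$ does not occur for $t\ge4$). When $\beta_3=0$, the mixed $A_1A_1$ term and the low $A_1A_2$ term vanish. The remaining terms would then be killed by Poincaré duality: by \eqref{exception Poincare} with $(i,j,\ell)=(1,1,2)$ and $(1,2,1)$, $A_1A_1\ne0$ iff $A_1A_2\neq0$. So it suffices to show one product vanishes. I would check, via \Cref{Mult_duality}, that a product $e_ie_j=f_k$ with both $e$'s low forces $e_jf_k'=g_i$, and that the degree of $f_k'$ is dual to that of $f_k$. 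This pairs the $\beta_2$ part with a degree where the $A_1A_2$ product would need $\beta_3\neq0$.

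\textbf{The "Moreover" clause.} This follows directly from \eqref{exception Poincare} with $c=4$ and $(i,j,\ell)=(1,1,2)$.

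\textbf{Main obstacle.} The hardest part is the careful bookkeeping of all degree coincidences, showing that the excluded set of $s$ is exactly $\{3\mu-3,\dots,3\mu\}$. The second delicate point is the $\beta_3=0$ case, where vanishing comes from duality rather than from degrees.
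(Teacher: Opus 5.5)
Your degree bookkeeping matches the paper's argument. It is fine for $\mu\neq 2$, for the two displayed bounds when $\mu=2$ and $\beta_3\neq 0$, and for the ``moreover'' clause via \eqref{exception Poincare} with $(i,j,\ell)=(1,1,2)$. There is one small inaccuracy. For $\mu=2$ the cross term $[A_1]_{s-1}\cdot[A_2]_3$ also lands in $[A_3]_{s+2}$, so the remaining cross terms do not all miss by degree. This is harmless for the upper bound, because $\dim_k[A_3]_{s+2}=\alpha_1$ caps everything landing there.

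The genuine gap is the case $\mu=2$, $\beta_3=0$. After you discard the terms involving $\beta_3$, two products survive: $[A_1]_2\cdot[A_1]_2\to[A_2]_4$ and $[A_1]_2\cdot[A_2]_s\to[A_3]_{s+2}$. Both need only $\beta_2\neq 0$.

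Your duality argument cannot kill them. Under the perfect pairing $[A_i]_d\times[A_{4-i}]_{s+4-d}\to[A_4]_{s+4}$, the dual of $f_k\in[A_2]_4$ is $f_k'\in[A_2]_s$, and the dual of $e_i\in[A_1]_2$ is $g_i\in[A_3]_{s+2}$. So $e_ie_j=f_k\iff e_jf_k'=g_i$ just matches these two surviving products with each other. Neither touches the $\beta_3$ entries, so \eqref{exception Poincare} only says they vanish or survive together. Your claim that duality pairs the $\beta_2$ part with a product requiring $\beta_3\neq 0$ is false.

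The missing idea, which the paper uses, is that $\beta_3=0$ forces $\beta_2=0$. The rows $s-\mu$, $s-\mu+1$ of \eqref{Betti_codim4} record the graded Betti numbers of $\omega_R(-a)$. If $\beta_3=0$, all minimal generators of $\omega_R(-a)$ have degree $s-\mu+2$. By minimality, its first and second syzygies then have degrees at least $s-\mu+3$ and $s-\mu+4$. This forces the entries $\beta_2$ and $\beta_1$ in row $s-\mu$ to vanish. Then $[A_2]_4=[A_2]_s=0$, and both surviving products are zero for degree reasons.
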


\begin{proof}  First observe that as $s\notin\{3\mu-3,3\mu-2,3\mu-1,3\mu\}$ the multiplication $[A_i]_e\cdot [A_j]_d\sube [A_{i+j}]_{e+d}$ when $e-i,d-j\in\{\mu-1,\mu\}$ cannot have $(e+d)-(i+j)\in\{s-\mu,s-\mu+1\}$. Thus this multiplication has the same bounds as in \Cref{bounds_for_codim3}. 
   
    As $s\geq 2\mu$
    when $\mu>2$  \Cref{General-bounds} give $\bar p=0$ and $\bar q=0$.
    Note that if $\beta_3 =0$ then also $\beta_2=0$, as the rows labeled $s-\mu$, $s-\mu+1$ and $s$ of the Betti table \eqref{Betti_codim4} represent the Betti table of the canonical module $\omega_{R}$ in the notation of \Cref{isDoubling}. 
    Using this and the restrictions on $s$,
    
    \Cref{General-bounds} gives $\bar p =0$ and $\bar q =0$ when $\beta_3 =0$ and gives the bound on $\bar p$ when $\mu =2$ and $\beta_3\neq 0$. 
    This leaves the bound on $\bar q$ when $\mu=2$ and $\beta_3\neq 0$ as the only case to consider.  

    When $\mu=2$ and $\beta_3\neq 0$, applying \Cref{General-bounds} gives the bound 
    \[\bar q\leq \min(\alpha_1\alpha_2,\beta_3)+\min(\alpha_2\beta_3+\alpha_1\beta_2,\alpha_1).
    \]
   We will show that $\alpha_2\beta_3+\alpha_1\beta_2\geq \alpha_1$, thus giving the desired bound.
    If  $\beta_2>0$ the inequality $\alpha_1\beta_2+\alpha_2\beta_3 \geq \alpha_1$ is obvious. If 
    $\beta_2=0$ we can deduce using the Euler characteristic of the resolution in  \eqref{Betti_codim4} that $\alpha_2=\alpha_1+\beta_1+\beta_3+\alpha_3-1\geq \alpha_1$, as $\beta_3\neq 0$. Once again in this case we have $\alpha_1\beta_2+\alpha_2\beta_3 \geq \alpha_1\beta_3 \geq \alpha_1$.
    
    By Poincare duality \Cref{Mult_duality} shows that if $\bar p=0$ or $\bar q=0$ then both $A_1A_1=0$ and $A_1A_2=0$. 

\end{proof}

\begin{example}\label{ex: codim 4 p q bounds}
    Using the doublings of the Betti tables in \Cref{fig_Betti_tables} we can calculate the maximum values of $\bar p$ and $\bar q$:

\begin{center}
\begin{tabular}{c|c c c c c c}
        $t=$& $4$&5&6&7&8&9\\
    \hline
     $\bar p \leq$& 0&5&5&3&1&0  \\
     $\bar q\leq$& 0&6&6&3&2&0
\end{tabular}
\end{center}
\end{example}

\begin{lemma}\label{pValue}
Let $R=Q/J$ be a codimension-three Cohen--Macaulay ring, and
suppose that $S=Q/I$ is a doubling of $R$ arising from an exact
sequence
\[
0\longrightarrow \omega_R(-a)
 \stackrel{\psi}{\longrightarrow} R
 \longrightarrow S
 \longrightarrow 0.
\]
Assume that the mapping-cone resolution of $S$ associated to this
sequence is minimal. Set
\[
A=\Tor^Q(R,k)
\qquad\text{and}\qquad
A'=\Tor^Q(S,k).
\]
Then the natural homomorphism $R\longrightarrow S$ induces an
injective homomorphism of bigraded $k$-algebras
$\iota_*:A\hookrightarrow A'$ and consequently,
\[
\dim_k(A_iA_j)\leq \dim_k(A_i'A_j')
\qquad\text{for all }i,j\geq0.
\]
\end{lemma}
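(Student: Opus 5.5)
Since $R\to S$ is a surjection of $Q$-algebras, the induced map $\iota_*:\Tor^Q(R,k)\to\Tor^Q(S,k)$ is a homomorphism of bigraded $k$-algebras. This follows from the functoriality of the Tor algebra structure: $\Tor^Q(-,k)$ carries a natural graded-commutative algebra structure, computable via the Koszul complex $K$ on $x_0,\dots,x_N$ as $H(R\otimes_Q K)$. A ring map $R\to S$ induces a DG algebra map $R\otimes_Q K\to S\otimes_Q K$, hence an algebra map in homology preserving both the homological and internal gradings. So the only real content is injectivity, and the inequality then follows formally.

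For injectivity I use the mapping cone. Let $F$ be a minimal free resolution of $R$ and $G$ a minimal free resolution of $\omega_R(-a)$; the latter is the shifted dual $F^*$, with $G_i=F_{3-i}^*(-N-1-a)$ up to the appropriate shift. The map $\psi$ lifts to a chain map $\tilde\psi:G\to F$, and the mapping cone $C=\mathrm{cone}(\tilde\psi)$ has $C_i=F_i\oplus G_{i-1}$ and resolves $S$. The natural inclusion $F\to C$, $x\mapsto(x,0)$, is a chain map lifting $R\to S$. By hypothesis $C$ is minimal, so $\Tor_i^Q(S,k)=C_i\otimes k=(F_i\otimes k)\oplus(G_{i-1}\otimes k)$ and $\Tor_i^Q(R,k)=F_i\otimes k$. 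Any lift of $R\to S$ computes $\iota_*$, so $\iota_*$ is the inclusion of the first summand, which is injective in each bidegree.

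Given injectivity, $\iota_*(A_iA_j)=\iota_*(A_i)\iota_*(A_j)\subseteq A'_iA'_j$. Since $\iota_*$ is injective, $\dim_k(A_iA_j)=\dim_k\iota_*(A_iA_j)\le\dim_k(A'_iA'_j)$, which holds bidegree by bidegree as well.

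The main obstacle is reconciling the two descriptions of $\iota_*$: the Koszul-side one, which makes it multiplicative, and the resolution-side one, which makes it injective. They agree by balancing of Tor. The isomorphism $H(F\otimes_Q k)\cong H(R\otimes_Q K)$ is natural in the module, via the double complex $F\otimes K$, so the map induced by the lift $F\to C$ on $H(-\otimes k)$ coincides with the map induced by $R\to S$ on Koszul homology. Alternatively, when $F$ and $C$ carry DG algebra structures, one can argue directly, but the Koszul route avoids choosing such structures. I would state the naturality explicitly and check that minimality makes the differentials of $F\otimes k$ and $C\otimes k$ vanish, so that homology equals the chain modules.
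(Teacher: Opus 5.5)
Your proposal is correct and follows the paper's proof. Both use the mapping-cone resolution $C_i=F_i\oplus G_{i-1}$, take the inclusion of the first summand as a lift of $R\to S$, and get injectivity of $\iota_*$ from minimality; both get multiplicativity from naturality of the Tor product. The paper simply cites Cartan--Eilenberg for that naturality, whereas you justify it explicitly via Koszul homology and the naturality of the balancing isomorphism through $F\otimes_Q K$.
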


\begin{proof}
Let $G\to R$ be a minimal free resolution of $R$, and let
$L\to\omega_R(-a)$ be a minimal free resolution of the shifted
canonical module. Choose a comparison map
$
\widetilde{\psi}:L\longrightarrow G
$ 
lifting the injection $\psi:\omega_R(-a)\to R$. The mapping cone $F=\operatorname{Cone}(\widetilde{\psi})$
is a free resolution of $S$. As graded free modules,
$F_n=G_n\oplus L_{n-1}$.

The inclusions $\iota_n:G_n\hookrightarrow F_n$,
$\iota_n(g)=(g,0)$ form a chain map $\iota:G\to F$ lifting the natural surjection
$R\to S$. Since both $G$ and $F$ are minimal,
\[
A'_n = F_n\otimes_Qk
 \cong
(G_n\otimes_Qk)\oplus(L_{n-1}\otimes_Qk)=A_n\oplus(L_{n-1}\otimes_Qk),
\]
and the induced map
\[
\iota_*:
A=H(G\otimes_Qk)\longrightarrow H(F\otimes_Qk)=A'
\]
is the inclusion of the first summand, hence it is injective. 

The Tor product is natural with respect to homomorphisms of $Q$-algebras, so $\iota_*$ is an algebra homomorphism; see \cite[Chapter~XI,~2.1 and \S 4]{CartanEilenberg}. Therefore
\[
\iota_*(A_iA_j)
 =
\iota_*(A_i)\iota_*(A_j)
 \subseteq A_i'A_j'.
\]
Since $\iota_*$ is injective, the asserted dimension inequalities follow.
\end{proof}

\begin{remark}
In the applications below, the Betti number formula in \Cref{isDoubling} shows that the mapping cone has no cancellations and is therefore minimal. Thus the minimality hypothesis in \Cref{pValue} is satisfied.
\end{remark}

The following lemma can be inferred from \cite{Gor_Six_Gen}, however we state it here with hypotheses and terminology suitable for our purposes. 

\begin{lemma}\label{6-gens}
    Let $Q$ be a graded polynomial ring over an algebraically closed field $k$ of characteristic not equal to 2. 
    Let $I\subset Q$ be a codimension 4 Gorenstein ideal with $6$ minimal generators, then $Q/I$ is in class $\GH(5)$. 
\end{lemma}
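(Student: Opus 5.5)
The plan is to read off the class of $Q/I$ from the Kustin--Miller classification recalled in \Cref{fig: codim 4}, using the structure theory of \cite{Gor_Six_Gen}. With $6$ minimal generators we have $m=\rank_k A_1-1=5$. Since $A_1$ has dimension $6>4$, $I$ is not a complete intersection, so $Q/I$ is not of class $\C(4)$. The remaining candidates are $\GT$, $\GS$ and $\GH(\bar p)$. The goal is to show $\bar p=5$, which by the table following \Cref{Mult_duality} identifies the class as $\GH(5)$.

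\textbf{Step 1: the structure theorem.} Localizing or passing to the graded setting does not change the Tor algebra. I would invoke the structure theorem of \cite{Gor_Six_Gen}. It says that a codimension four Gorenstein ideal with six generators (char $\neq 2$) arises from a codimension three almost complete intersection by a specific construction. Equivalently, $I$ is a hypersurface section or linkage of a grade three almost complete intersection, and its minimal resolution carries an explicit DG-algebra structure. In that structure one generator $e_6$ multiplies the other five $e_1,\dots,e_5$ to five linearly independent elements $f_1,\dots,f_5$ of $A_2$ modulo $\mathfrak m$, while all other products $e_ie_j$ with $i,j\le 5$ vanish in $A$.

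\textbf{Step 2: fixing the isomorphism class.} From Step 1, $A_1\cdot A_1$ is spanned by $e_6e_1,\dots,e_6e_5$, so $\bar p=5$. By \Cref{Mult_duality}, the products in $A_1\cdot A_2$ are then forced to be $e_6f_i'=-g_i$ and $e_if_i'=g_6$, giving $\bar q=6$. This matches the row $\GH(\bar p)$ with $\bar p=5$, $\bar q=\bar p+1$. As a sanity check, $\GT$ has $\bar p=3$ and cannot produce a rank-$5$ multiplication. Hence the class is $\GH(5)$.

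\textbf{Alternative route.} If one prefers to avoid quoting the explicit multiplication, one can argue by elimination using the same structure theorem. Kustin--Miller show that class $\GS$ never occurs for $m+1=6$ generators, and \cite{Gor_Six_Gen} gives $\bar p\ge 1$. A $\GH(\bar p)$ algebra on six generators needs $\bar p\le 5$, and dualizing the rank-five structure excludes $\bar p<5$.

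\textbf{Main obstacle.} The hard step is Step 1: extracting from \cite{Gor_Six_Gen} the precise statement that the Tor multiplication has rank exactly $5$. The first thing I would try is to use the linkage description. Link $I$ to an almost complete intersection of grade three; such rings have nonzero $A_1\cdot A_1$ by \Cref{aci-class}. Then track, via the mapping cone, how the unit products lift to the rank-five product $e_6e_i=f_i$. The hypotheses of algebraic closure and characteristic $\neq 2$ are exactly what is needed to apply that classification.
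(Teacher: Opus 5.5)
\textbf{There is a genuine gap in Step 1, and Step 1 is the entire content of the lemma.} You assert the multiplication table ($e_6e_1,\dots,e_6e_5$ independent, $e_ie_j=0$ for $i,j\le 5$) as if it were part of the structure theorem, but you do not derive it. The structure you attribute to \cite{Gor_Six_Gen} is also wrong. What that result gives is $I=(L,y)$, where $L$ is generated by the submaximal Pfaffians of a $5\times 5$ skew-symmetric matrix and $y$ is regular on $Q/L$. So $L$ is a five-generated grade three Gorenstein ideal, not an almost complete intersection.

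Your version cannot hold, for two reasons. First, a hypersurface section of a grade three almost complete intersection has five generators and is not Gorenstein: its type equals that of the almost complete intersection, which is $n\ge 2$ by \Cref{aci-class}. Second, linkage preserves grade, so $I$ is not linked to any grade three ideal. Consequently your proposed repair fails as well. Even granting an almost complete intersection in the picture, it would have class $\H(3,2)$, $\T$ or $\H(3,0)$, so $p=3$. Through an injection as in \Cref{pValue} this gives at best $\bar p\ge 3$, which does not separate $\GH(5)$ from $\GT$, $\GH(3)$ or $\GH(4)$. The ``alternative route'' is circular: excluding $\bar p<5$ by ``dualizing the rank-five structure'' presupposes that structure, and the claim that $\GS$ cannot occur with six generators is unsupported.

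\textbf{With the correct structure theorem, the missing step is a short computation.} Let $G$ be the minimal resolution of $Q/L$ and $K$ the Koszul complex on $y$. Since $y$ is regular on $Q/L$, $G\otimes_Q K$ is a minimal DG algebra resolution of $Q/I$. Both complexes have zero differential after $\otimes_Q k$, so K\"unneth gives an algebra isomorphism $A\cong B\otimes_k\bigwedge(e_6)$, where $B=\Tor^Q(Q/L,k)$ is of class $\G(5)$. Then $A_1=B_1\oplus ke_6$ and $A_2=B_2\oplus e_6B_1$. Because $B_1B_1=0$:
\begin{itemize}
\item $A_1A_1=e_6B_1$, which has rank $5$;
\item $A_1A_2=B_1B_2\oplus e_6B_2$, which has rank $1+5=6$.
\end{itemize}
Hence $\bar p=5$ and $\bar q=6$, and your Step 2 identification of the class as $\GH(5)$ then goes through.
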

\begin{proof}
  From  \cite{Gor_Six_Gen} $I=(L,y)$ where $L$ is the ideal generated by submaximal pfaffians of a $5\times 5$ skew-symmetric matrix and $y$ is a regular element of $Q/L$. Let $G$ be the minimal free resolution of $Q/L$ and $K$ the Koszul complex on $y$. The tensor product $F=G\otimes_QK$ has a DG algebra structure and is a resolution of $Q/I$. 
  
  By K\"unneth's Theorem $H(F\otimes_Q k)\cong H(G\otimes_Q k)\otimes_k H(K\otimes_Q k)$. As $Q/L$ is Gorenstein with 5 generators we know it is of class $\G(5)$ which gives the multiplication on $H(G\otimes_Q k)$. We also know that $Q/(y)$ is in class $\C(1)$ as it is of codepth one so the multiplication of $H(K\otimes_Q k)$ is that of an exterior algebra on one element. The multiplication on $H(F\otimes_Q k)$ can be recovered from these classes which shows $Q/I$ is in class $\GH(5)$.

\end{proof}

In the following theorem we employ the notation $d(t)$ from \eqref{eq: dt} as well as the fact that the invariants $\bar{p}$ and $\bar{q}$ in \Cref{Notation Gor tor invariants}
 suffice to classify Tor algebras of codimension 4 Gorenstein rings.

\begin{theorem}\label{AG codepth 4}

 Graded Artinian Gorenstein  rings $S$ of codimension 4 of Waring  rank $t$, with socle degree $s$ satisfying 
$2\mu(t)\leq s\leq3\mu(t)-4$ or $s\geq3\mu(t)+1$
that are general with respect to Betti numbers are classified by the value of $t$ as given in \Cref{full_theorem}. Their associated Tor algebras have maximum multiplication rank. 
\end{theorem}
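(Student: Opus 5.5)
The plan is to reduce everything to the codimension-three results via \Cref{isDoubling} and then finish case by case in $t$, using \Cref{bounds_for_codim4} for upper bounds and \Cref{pValue} (or \Cref{6-gens}) for lower bounds.

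\textbf{Setup.} Let $S=Q/\Ann(F)$ be general with respect to Betti numbers. Then $F=F_X$ for a set $X$ of $t$ points whose coordinate ring $R$ has the Betti table predicted by the MRC, as in \eqref{Betti_codim3}. By \cite[Corollary 4.7]{EisenbudSyzygy}, $\reg(R)=d(t)\le \mu(t)$, so the hypothesis $s\ge 2\mu(t)$ gives $s\ge 2\reg(R)$. \Cref{isDoubling} then shows that $S$ is a doubling of $R$ with Betti table \eqref{Betti_codim4}. Since that formula leaves no room for cancellation, the mapping cone is minimal, and \Cref{pValue} yields $\dim_k(A_iA_j)\le\dim_k(A'_iA'_j)$. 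The condition on $s$ is exactly the hypothesis of \Cref{bounds_for_codim4}, so the upper bounds tabulated in \Cref{ex: codim 4 p q bounds} apply. Once $\bar p$ and $\bar q$ are known, the table after \Cref{Mult_duality} determines the class.

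\textbf{Case analysis in $t$.}
\begin{enumerate}
\item[$t=4$.] Here $\mu=2$ and $\beta_3=0$, so \Cref{bounds_for_codim4} gives $\bar p=\bar q=0$, which is class $\GS$.
\item[$t=9$.] The bounds give $\bar p=\bar q=0$, hence class $\GS$.
\item[$t\ge 10$.] Here $\mu(t)\ge3$, so the ``otherwise'' clause of \Cref{bounds_for_codim4} again gives $\GS$.
\item[$t=5,6$.] From \eqref{Betti_codim4}, the number of minimal generators of $\Ann(F)$ is $\alpha_1+\beta_1+\beta_3$. Reading \Cref{fig_Betti_tables}, this is $5+0+1=6$ for $t=5$ and $4+0+2=6$ for $t=6$. \Cref{6-gens} therefore places $S$ in class $\GH(5)$, matching the table in \Cref{full_theorem}.
\item[$t=7$.] \Cref{ClassThmForJ} puts $R$ in class $\T$, so $p=3$, and \Cref{pValue} gives $\bar p\ge3$. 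The bounds give $\bar p\le 3$ and $\bar q\le3$. With $\bar p=3$, the only candidates are $\GT$ and $\GH(3)$, and $\GH(3)$ needs $\bar q=4$. Hence $S$ is in class $\GT$.
\item[$t=8$.] Here $R$ is in class $\H(1,0)$, so $\bar p\ge1$. Combined with $\bar p\le1$, this forces class $\GH(1)$ with $\bar q=2$.
\end{enumerate}

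\textbf{Maximum multiplication rank.} In each case the attained $(\bar p,\bar q)$ equals the bound from \Cref{ex: codim 4 p q bounds}:
\begin{itemize}
\item $(5,6)$ for $t=5,6$;
\item $(3,3)$ for $t=7$;
\item $(1,2)$ for $t=8$;
\item $(0,0)$ otherwise.
\end{itemize}
The bounds in \Cref{bounds_for_codim4} are sums of the bigraded-piece bounds from \Cref{General-bounds}. Equality of the totals therefore forces equality in each bigraded piece, where that bound is not itself forced to vanish. For $t=9$, it is forced to vanish by the Poincar\'e-duality exception \eqref{exception Poincare}, since the $A_1\cdot A_1$ and $A_1\cdot A_2$ ranks are zero together. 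Products involving $A_3$ and $A_4$ are determined by Poincar\'e duality.

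\textbf{Main obstacle.} The hardest step is this last degree-by-degree verification, especially for $t=5,6$. There the class comes from a structure theorem rather than from the bounds, so one must check that the $\GH(5)$ products actually populate each bigraded piece maximally. My first approach would be to use the injection $A\hookrightarrow A'$ to account for the internal-degree-$4$ products coming from $R$. The remaining products, which land in the dual rows $s-\mu$ and $s-\mu+1$, would then be handled by the duality $e_ie_j=f_k\iff e_jf_k'=g_i$ of \Cref{Mult_duality}.
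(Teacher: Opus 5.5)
Your classification argument is the paper's proof almost verbatim. It uses the doubling from \Cref{isDoubling}, the upper bounds from \Cref{bounds_for_codim4}, $\bar p\ge p$ via \Cref{pValue} for $t=7,8$, and \Cref{6-gens} for $t=5,6$. One small slip: column~1 of \eqref{Betti_codim4} gives $\alpha_1+\beta_1+\beta_3+\alpha_3$ generators, not $\alpha_1+\beta_1+\beta_3$. This is harmless because $\alpha_3=0$ for $t=5,6$. For maximum multiplication rank, the paper only checks that the attained $(\bar p,\bar q)$ meet the bounds in \Cref{ex: codim 4 p q bounds}, which is the first sentence of your paragraph.

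The step that fails is the next one: ``equality of the totals forces equality in each bigraded piece,'' together with the degree-by-degree check you plan for $t=5,6$.

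\begin{itemize}
\item The $\bar p$ bound is a sum of single-piece bounds. The summand $\alpha_1$ in the $\bar q$ bound is not. It bounds all of $[A_3]_{s+2}$, which receives both $[A_1]_2\cdot[A_2]_s$ and $[A_1]_{s-1}\cdot[A_2]_3$, so attaining it says nothing about either piece separately.
\item For $t=6$ the per-piece claim is actually false. Here $\dim_k[A_1]_{s-1}=\beta_3=2$, $\dim_k[A_2]_3=\alpha_2=2$ and $\dim_k[A_3]_{s+2}=\alpha_1=4$, so \eqref{alg_bound_eq} allows rank $4$.
\item In class $\GH(5)$ of \Cref{fig: codim 4}, the set $H=\{x\in A_1:\dim_k(x\cdot A_1)\le 1\}$ is exactly the hyperplane $\ker e_6^*$. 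It is defined intrinsically, so it is stable under the internal-grading automorphisms and hence homogeneous.
\item $H$ cannot contain $[A_1]_2$, since $[A_1]_2\cdot[A_1]_2$ has rank $3$ (forced by $\bar p=5=3+2$). Therefore $[A_1]_{s-1}\subseteq H$.
\item For $x\in H$ and $z\in A_1$ one has $zx=e_6^*(z)\,(e_6x)$. So for $y\in A_2$ the functional $z\mapsto z\cdot xy$ is a multiple of $e_6^*$. By Poincar\'e duality, $[A_1]_{s-1}\cdot[A_2]_3\subseteq k\,g_6$, which has rank $1$, not $4$.
\end{itemize}

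So the obstacle you name cannot be overcome. (For $t=5,7,8$ each piece does happen to be maximal, but $t=6$ breaks the claim.) What your argument, and the paper's, actually establishes is that the aggregated bounds of \Cref{bounds_for_codim4} are attained. You should stop at that point rather than assert a degree-by-degree statement.
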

\begin{proof}
 Write $S=Q/\Ann(F_X)$ with $Q$ a polynomial ring and $F_X=\ell_1^{s}+\cdots \ell_t^{s}$ as in \eqref{eq: FX}. 
 
By \Cref{general AG} the set $X$ is general with respect to Betti numbers, thus its coordinate ring $R $ has Betti table \eqref{Betti_codim3}. The regularity of $R$ is $\reg(R)=d(t)$ by \cite[Corollary 4.7]{EisenbudSyzygy} and the least degree of a relation of $R$ is $\mu(t)$ as in \eqref{eq: mu}. 

Since  $d(t)\leq \mu(t)$, the hypothesis $s\geq 2\mu(t)$ implies $s\geq 2d(t)$. Thus \Cref{isDoubling} gives that $S$ is doubling of $R$ and  by  \Cref{ex} $S$ has Betti table \eqref{Betti_codim4}. The inequalities relating $s$ and $\mu(t)$ ensure that the hypothesis of \Cref{bounds_for_codim4} is satisfied.

Since $S$ has codimension four, $R$ must have codimension $3$ which gives that $t\geq 4$.

When $t=4$ or $t\geq 9$ by \Cref{bounds_for_codim4} $\bar p =\bar q=0$ so the ring $S$ is of class $\GS$.

When $t=5$ or 6 from \Cref{isDoubling} and \Cref{fig_Betti_tables} we know $\Ann(F_X)$ will have $6$ minimal generators. By \Cref{6-gens} we obtain that $S$ is in class $\GH(5)$.

When $t=7$ from \Cref{ClassThmForJ} we know that $p=3$ for $R$. By \Cref{pValue} we know $\bar p\geq 3$, and by \Cref{bounds_for_codim4} we know $\bar p\leq 3$ so $\bar p=3$. When $\bar p=3$ there are two possible classes,  $\GT$ or $\GH(3)$. However for $\GH(3)$ we need $\bar q=4$ but from \Cref{bounds_for_codim4} we get $\bar q\leq 3$ so $S$ must be in class $\GT$.

Similarly when $t=8$, we know $1=p\leq \bar p$. By \Cref{bounds_for_codim4} we know $\bar p\leq 1$ so $\bar p=1$ and $S$ must be of class $\GH(1)$.  
\end{proof}

\section{Examples}

We showed that the Tor algebra classes of coordinate rings for general points in $\P^3$ are determined by the number of points. 
We have also shown maximum multiplication rank is achieved for coordinate rings of general points in $\P^3$  and their doublings of high enough socle degree, but this need not hold for other configurations of points in $\P^3$. 
\begin{definition}
    A set of $t$ points in $\P^N$ has {\it general Hilbert function} if its coordinate ring $R$ is $H_R(i)=\min({N+i\choose N},t)$.
\end{definition}

Using Macaulay2 and the package TorAlgebra \cite{TorAlgebraSource} we calculate some examples. 
\begin{example}\label{Ex: other points} Let $Q=k[x,y,z,w]$. The ideal 
\[
J=(zw,yw,xw,xz+z^2,xy+yz,y^2z-z^3)
\]
is the defining ideal of five points in $\P^3$ with three of them on a line.
The ideal \[U=(yw,xy+yz,xzw+zw^2,x^2z-2y^2z-xz^2-z^2w-zw^2,z^3w-zw^3,x^3w-xw^3)\]
the the defining ideal of eleven points in $\P^3$.

    The coordinate rings $Q/J$and $Q/U$ have Betti tables and Hilbert functions: 
    \begin{multicols}{2}
    
         $Q/J:\;$  \begin{tabular}{r c c c c}
      & 0 & 1 & 2 & 3 \\
    total: & 1 & 6 & 8 & 3\\
    0 : & 1 & . & . & . \\
    1 : & . & 5 & 6 & 2 \\
    2 : & . & 1 & 2 & 1 
    \end{tabular} 
    
    \vspace{10pt}
    
    $H_{Q/J}=(1,4,5,5,...)$
    
    \columnbreak
    
    $Q/U:\;\begin{matrix}
       & 0 & 1 & 2 & 3\\
      \text{total:} & 1 & 6 & 8 & 3\\
      0: & 1 & . & . & .\\
      1: & . & 2 & 1 & .\\
      2: & . & 2 & 2 & .\\
      3: & . & 2 & 5 & 3
      \end{matrix} $
      
       \vspace{5pt}
       
      $H_{Q/U}=(1,4,8,11,11...)$
    \end{multicols}

Applying \Cref{General-bounds} we get that $p
\leq 2$ and $q\leq 1$ for $Q/J$, and $p\leq 5$ and $q\leq 3$ for $Q/U$. However neither ring achieves these bounds as $Q/J$ has Tor algebra class $\H(0,0)$ which means $p=q=0$ and $Q/U$ is of class $\B$ with $p=q=1$.

A doubling of $J$ with socle degree 7 is the ideal 
\[I=(zw,yw,xw,xz+z^2,xy+yz,y^2z-z^3, 2y^6-3z^6,yz^6-2w^7,x^7-w^7)\sube Q.\] The Betti numbers of $Q/I$ are as in \Cref{isDoubling}.
Again applying \Cref{General-bounds} we get the bound $\bar p\leq 7$ and $\bar q\leq 6$ but these bounds are not achieved as it is of class $\GS$ which has $\bar p=\bar q=0$. 
\end{example}

This shows that ideals of points and their doublings need not have maximum multiplication, even for general Hilbert function as in $Q/J$.

The rings $Q/J$ and $Q/U$ in \Cref{Ex: other points} as well as the coordinate ring of four general points have total Betti numbers $(1,6,8,3)$. Both $Q/J$ and the coordinate rings of four general points are in class $\H(0,0)$, but $Q/U$ is of class $\B$, so total Betti numbers are not sufficient to classify ideals of points. However unlike the other two $Q/U$ does not not have general Hilbert function.

These examples lead to the question:

\begin{quest}
    Are the Tor algebra classes for coordinate rings of points in $\P^3$ with general Hilbert function determined by their total Betti numbers? 
\end{quest}

\bibliographystyle{alpha}
\bibliography{refs}

\end{document}